\newtheorem{theorem}{Theorem}[section]
\newtheorem{lemma}[theorem]{Lemma}
\newtheorem{corollary}[theorem]{Corollary} 
\newtheorem{proposition}[theorem]{Proposition} 
\theoremstyle{definition}
\theoremstyle{remark}
\newtheorem{remark}[theorem]{Remark}
\numberwithin{equation}{section}
\newcommand*{\RR}{\mathbb{R}}
\newcommand*{\NN}{\mathbb{N}}
\DeclareMathOperator{\EE}{\mathbb{E}} 
\DeclareMathOperator{\Ent}{Ent}	
\DeclareMathOperator{\Var}{Var}	
\newcommand*{\sgn}{\operatorname{sgn}}
\title{Functional inequalities for two-level concentration}
\author[F. Barthe]{Franck Barthe}
\address{Institut de Math\'ematiques de Toulouse; UMR 5219 \\ Universit\'e de Toulouse; CNRS
\\UPS - F-31062 Toulouse Cedex 9, France}
\email{barthe@math.univ-toulouse.fr}
\author[M. Strzelecki]{Micha{\l} Strzelecki}
\address{Institute of Mathematics, University of Warsaw, Banacha 2, 02--097 Warsaw, Poland.}
\email{michalst@mimuw.edu.pl}
\thanks{Research partially supported by the National Science Centre, Poland, grants
no.\ 2015/19/N/ST1/00891 (MS) and 2017/24/T/ST1/00323 (doctoral scholarship of MS)}
\date{October 1, 2019} %
\subjclass[2010]{Primary: 60E15. Secondary: 26D10.}
\keywords{Beckner-type inequalities, concentration of measure, modified log-Sobolev inequalities.}
\begin{document}

\begin{abstract}
Probability measures satisfying a Poincar\'e inequality are known to enjoy a dimension-free 
concentration inequality with exponential rate. 
A celebrated result of Bobkov and Ledoux shows that  a Poincar\'e
inequality automatically implies a modified logarithmic Sobolev inequality. As a consequence the Poincar\'e inequality ensures a stronger dimension-free concentration property, known as two-level concentration. We show that a similar phenomenon occurs for the Lata{\l}a--Oleszkiewicz inequalities, which were devised to uncover dimension-free concentration with rate between exponential and Gaussian. Motivated by the search for counterexamples to related questions, we also develop analytic techniques to study functional inequalities for probability measures on the line with wild potentials. 
\end{abstract}

\maketitle

\section{Introduction}

This article is a contribution to the functional approach to concentration inequalities, see, e.g., \cite{MR1849347}. We work in the setting of Euclidean spaces $\big(\RR^d, \langle\cdot,\cdot\rangle,|\cdot|\big)$, although most of the results extend to more general settings as Riemannian manifolds. 
 
First we recall the main functional inequalities which allow to establish concentration properties. A probability measure $\mu$ on $\RR^d$ satisfies a logarithmic Sobolev inequality if there is a constant $C_{LS}<\infty$ such that for all smooth functions $f\colon \RR^d\to \RR$, 
\begin{equation}
\label{eq:LSI-intro}
\Ent_\mu(f^2) \le C_{LS} \int_{\RR^d} |\nabla f|^2 d\mu,
\end{equation}
where $\Ent_\mu(g)=\int g\log g \, d\mu-\left(\int g\, d\mu\right) \log \left(\int g\, d\mu\right)$ is the entropy of a nonnegative function. It is convenient to denote by $C_{LS}(\mu)$ the smallest possible constant $C_{LS}$ in \eqref{eq:LSI-intro}. Classically the inequality tensorizes, meaning $C_{LS}(\mu^{\otimes n})=C_{LS}(\mu)$ for all $n$, and the standard Gaussian measure satisfies a logarithmic Sobolev inequality. Conversely, measures with a log-Sobolev inequality enjoy a dimension-free concentration inequality with Gaussian rate: for all $n\ge 1$ and for all measurable $A\subset \RR^{nd}$ with $\mu^{\otimes n}(A)\ge \frac12$, it holds for all $t>0$
\[
\mu^{\otimes n}\big( A+t B_2\big)\ge 1-e^{-K\frac{t^2}{C_{LS}}},
\]
where $K$ is a numerical constant and $B_2=B_2^{nd}$ denotes the Euclidean unit ball (of $\RR^{nd}$). This concentration property can also be formulated in terms of deviations of functions, as we will mention later. 

The other main property in the field is the Poincar\'e inequality. A probability measure $\mu$ on $\RR^d$ enjoys a Poincar\'e inequality if there exists a constant $C_P<\infty$ such that for all smooth $f\colon\RR^d\to \RR$,  
\begin{equation}
\label{eq:Poinc-intro}
\Var_\mu(f)\le C_P \int_{\RR^d} |\nabla f|^2 d\mu
\end{equation}
where $\Var_\mu(f)=\int f^2 d\mu-\left(\int f\, d\mu\right)^2$ is the variance of $f$ with respect to $\mu$. Again $C_P(\mu)$ denotes the minimal constant for which the inequality holds. The Poincar\'e inequality tensorizes ($C_P(\mu^{\otimes n})=C_P(\mu)$) and ensures dimension-free concentration properties with exponential rate: for all $n$ and all $A\subset \RR^{nd}$ with $\mu^{\otimes n}(A)\ge \frac 12$, and for all $t>0$,
\begin{equation}
\label{eq:exp-concentration}
\mu^{\otimes n}\big( A+t B_2\big)\ge 1-e^{-K\frac{t}{\sqrt{C_P}}},
\end{equation}
where $K$ is a numerical constant. The symmetric exponential distribution $d\nu(t)=e^{-|t|}dt/2$
on $\RR$ satisfies a Poincar\'e inequality, but not the log-Sobolev inequality. In \cite{MR1122615}, Talagrand proved a stronger concentration property than the above one: if $\nu^{\otimes n} (A)\ge \frac12$ then for all $t>0$,
\begin{equation}
\label{eq:Talagrand}
 \nu^{\otimes n}\big( A+\sqrt{t} B_2+t B_1\big)\ge 1-e^{-t/C},
\end{equation}
for some universal constant $C$, where $B_p=B_p^n=\{x\in \RR^n;\; \sum_{i=1}^n |x_i|^p\le 1\}$.
Talagrand's two-level concentration inequality  \eqref{eq:Talagrand} is doubly sharp: for $A=(-\infty,0]\times \RR^{n-1}$ it captures the exponential behaviour of coordinates marginals of $\nu^{\otimes n}$, while for $A=\{x;\; \sum_i x_i\le 0\}$  using that $B_1^n\subset\{x\in \RR^n; \; \sum_i x_i\le 1\}$ and $B_2^n\subset\{x\in \RR^n; \; \sum_i x_i\le \sqrt n\}$, one gets 
\[ \nu^{\otimes n}\left( \left\{ x\in \RR^n;\; \frac{\sum_{i=1}^n x_i}{\sqrt n} \le \sqrt{t}+\frac{t}{\sqrt{n}} \right\} \right) \ge 1-e^{-t/C},\]
which is asymptotically of the right order in $t$ when $n\to \infty$, according to the Central Limit Theorem. 

Talagrand's two-level concentration phenomenon was incorporated in the functional approach by Bobkov and Ledoux: they introduced a modified log-Sobolev inequalities which implies concentration of the type \eqref{eq:Talagrand}. More importantly, they showed that it is implied
by the Poincar\'e inequality (which means that the concentration consequences of that inequality are stronger than \eqref{eq:exp-concentration}). More precisely, 
\begin{theorem}[Bobkov--Ledoux \cite{MR1440138}]
\label{theo:Boblov-Ledoux}
Let $\mu$ be a probability measure on $\RR^d$, which satisfies a Poincar\'e inequality with constant $C_P$. Then for any $c\in (0, C_P^{-1/2})$, there exists $K(c,C_P)<\infty$ such that for all smooth $f\colon \RR^d\to (0,\infty)$ such that pointwise $\left| \frac{\nabla f}{f}\right| \le c$,
\[
\Ent_\mu(f^2) \le K(c,C_P) \int_{\RR^d} |\nabla f|^2 d\mu.
\]
\end{theorem}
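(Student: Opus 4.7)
Substitute $f=e^{g/2}$, so the hypothesis becomes $|\nabla g|\le 2c$ pointwise, and the target inequality rewrites as
\[
\Ent_\mu(e^g)\le \frac{K(c,C_P)}{4}\int|\nabla g|^2 e^g\,d\mu.
\]
The strategy is to bootstrap the Poincar\'e inequality into a modified log-Sobolev form by iterating it on the ``half-exponential'' family $u_s:=e^{sg/2}$ for $s\in(0,1]$. The hypothesis $c<C_P^{-1/2}$ will reappear naturally as the threshold beyond which the iteration breaks down.

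\textbf{Single self-improvement step.} Apply the Poincar\'e inequality \eqref{eq:Poinc-intro} to $u_s$: since $\nabla u_s=\tfrac{s}{2}(\nabla g)\,e^{sg/2}$, one obtains
\[
\int e^{sg}\,d\mu-\Bigl(\int e^{sg/2}\,d\mu\Bigr)^{\!2} \le \frac{C_P s^2}{4}\int|\nabla g|^2 e^{sg}\,d\mu \le c^2 C_P s^2\int e^{sg}\,d\mu,
\]
where the last bound uses $|\nabla g|^2\le 4c^2$. Rearranging gives $(1-c^2C_Ps^2)\int e^{sg}d\mu\le(\int e^{sg/2}d\mu)^2$, which is informative precisely when $s<(c\sqrt{C_P})^{-1}$; by hypothesis this holds for all $s\in[0,1]$.

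\textbf{Iteration.} Setting $\phi(s):=\log\int e^{sg}d\mu$, taking logarithms gives $\phi(s)\le 2\phi(s/2)-\log(1-c^2C_Ps^2)$. Iterating $n$ times,
\[
\phi(s)\le 2^n\phi(s/2^n)+\sum_{k=0}^{n-1}2^k\log\frac{1}{1-c^2C_Ps^2/4^k}.
\]
Since $\phi(u)=u\int g\,d\mu+O(u^2)$ near $0$, we have $2^n\phi(s/2^n)\to s\int g\,d\mu$; and each summand is $O(2^k\cdot 4^{-k})=O(2^{-k})$, so the series converges to a finite $A(sc,C_P)$, with $A(u,C_P)=O(u^2)$ near $0$. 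Letting $n\to\infty$,
\[
\phi(s)\le s\int g\,d\mu + A(sc,C_P)\qquad\text{for all }s\in[0,1].
\]

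\textbf{From Laplace control to entropy.} Using the identity $\tfrac{d}{ds}(s\phi'(s)-\phi(s))=s\phi''(s)$ and $\phi(0)=0$, the entropy admits the representation
\[
\Ent_\mu(e^g)=e^{\phi(1)}\bigl(\phi'(1)-\phi(1)\bigr)=e^{\phi(1)}\int_0^1 s\,\phi''(s)\,ds=e^{\phi(1)}\int_0^1 s\,\Var_{\mu_s}(g)\,ds,
\]
where $d\mu_s=e^{sg}/Z_s\,d\mu$. To control $\Var_{\mu_s}(g)$ by the Dirichlet form $\int|\nabla g|^2 d\mu_s$, one applies Poincar\'e for $\mu$ one more time — this time to $(g-\bar g_s)e^{sg/2}$ where $\bar g_s=\EE_{\mu_s}g$ — and invokes the preceding uniform bound on $\phi(s)$ to absorb the exponential weights. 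Integrating in $s$ and recombining yields the sought entropic bound with an explicit $K(c,C_P)<\infty$.

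\textbf{Main obstacle.} The delicate step is the last one: replacing the crude bound $\int|\nabla g|^2e^{sg}d\mu\le 4c^2\int e^{sg}d\mu$ used during the iteration by the genuine Dirichlet form $\int|\nabla g|^2 e^g d\mu$ in the final estimate. This is what forces the introduction of the tilted measures $\mu_s$ and the variance identity for $\phi''$, and is also what makes $K(c,C_P)$ blow up exactly as $c\nearrow C_P^{-1/2}$, since the residual series $A(c,C_P)$ diverges at that threshold.
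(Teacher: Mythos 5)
The paper does not actually prove this theorem: it is quoted from Bobkov--Ledoux \cite{MR1440138} as a known result, so your argument can only be compared with the classical proof. Your Steps 1--3 are correct and standard: applying Poincar\'e to $e^{sg/2}$, using the crude bound $|\nabla g|^2\le 4c^2$, and iterating dyadically is exactly the Aida--Stroock/Bobkov--Ledoux derivation of the Laplace-transform estimate $\phi(s)\le s\int g\,d\mu+A(sc,C_P)$, and the threshold $c<C_P^{-1/2}$ enters there correctly. The identity $\Ent_\mu(e^g)=e^{\phi(1)}\int_0^1 s\,\Var_{\mu_s}(g)\,ds$ is also correct. But all of this only yields the concentration/exponential-integrability consequence of Poincar\'e, not the modified log-Sobolev inequality.

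The gap is in your final step, which is exactly where the content of the theorem lies, and as written it is a plan rather than a proof; two of its ingredients fail as stated. First, Poincar\'e applied to $h=(g-\bar g_s)e^{sg/2}$ gives $\int h^2\,d\mu\le\bigl(\int h\,d\mu\bigr)^2+C_P\int|\nabla h|^2\,d\mu$, and the mean term $\bigl(\int(g-\bar g_s)e^{sg/2}\,d\mu\bigr)^2$ does not vanish (the centering is with respect to $\mu_s$, not with respect to $e^{sg/2}d\mu$); it sits on the upper-bound side and by Cauchy--Schwarz is a priori of the same order as the quantity $Z_s\Var_{\mu_s}(g)$ you are trying to estimate, so it cannot be dropped and requires a separate nontrivial argument. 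Second, and more fundamentally: even granting $\Var_{\mu_s}(g)\lesssim\int|\nabla g|^2\,d\mu_s$, you arrive at $\int_0^1 s\,e^{\phi(1)-\phi(s)}\int|\nabla g|^2e^{sg}\,d\mu\,ds$, and no bound on $\phi$ lets you ``absorb the exponential weights'': for $s<1$ the integrals $\int|\nabla g|^2e^{sg}\,d\mu$ and $\int|\nabla g|^2e^{g}\,d\mu$ are not comparable in general, since $\nabla g$ may be supported where $g$ is very negative and there $e^{sg}\gg e^{g}$. This weight mismatch is the crux, and the Bobkov--Ledoux proof is structured precisely to avoid it: after centering one bounds $\Ent_\mu(e^g)\le\int(ge^g-e^g+1)\,d\mu$ and applies Poincar\'e only to functions of the form $P(g)e^{g/2}$ (such as $ge^{g/2}$), so that the full weight $e^{g}$ appears in every Dirichlet form; the resulting term $(1+\varepsilon)C_Pc^2\int g^2e^g\,d\mu$ is absorbed into the left-hand side using exactly $c<C_P^{-1/2}$, and the remaining scalar error terms are controlled by the exponential integrability your Steps 2--3 provide. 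You would need to restructure the final step along these lines (or supply genuinely new estimates for the two points above) before the argument closes.
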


The Central Limit Theorem and an argument of Talagrand \cite{MR1122615} roughly imply
that if dimension-free concentration in Euclidean spaces occurs, then the rate of concentration 
cannot be faster than Gaussian, and the measure should be exponentially integrable. In this sense, Poincar\'e and log-Sobolev inequalities describe the extreme dimension-free properties.
The functional approach to  concentration with intermediate rate (between exponential and Gaussian) was developed by Lata{\l}a and Oleszkiewicz \cite{MR1796718}.
We say that a probability measure $\mu$ on $\RR^d$ satisfies the \emph{Lata{\l}a--Oleszkiewicz inequality}  with parameter $r\in [1,2]$, if there exists a constant $C_{LO(r)}<\infty$ such that for every smooth $f\colon \RR^d\to\RR$  one has
\begin{equation}
 \label{eq:LO}
 \sup_{\theta\in(1,2)} 
 \frac{\int_{\RR^d} f^2 d\mu - \Bigl(\int_{\RR^d} |f|^\theta d\mu\Bigr)^{2/\theta}}{(2-\theta)^{2(1-1/r)}}
 \leq C_{LO(r)} \int_{\RR^d} |\nabla f|^2 d\mu.
\end{equation}
Let us stress here that most of the information is encoded in the speed at which $(2-\theta)^{2(1-1/r)}$ vanishes as $\theta\to 2^-$ (by omitting the supremum on the left-hand side of~\eqref{eq:LO} and only considering a fixed $\theta\in(1,2)$ one gets a~significantly weaker inequality).
We sometimes omit  the dependence in $r$ in the notation when there is no ambiguity on the value of $r$. For $r=1$ the inequality is equivalent to Poincar\'e inequality.
 For $r=2$ and the Gaussian measure, such  inequalities  were first considered by Beckner  \cite{MR954373}. Moreover, up to the constants, the inequality for $r=2$ is equivalent to the log-Sobolev inequality (note in particular that the limit as $\theta\to 2^-$ of the ratio on the left-hand side is the entropy). Lata{\l}a and Oleszkiewicz proved that the above functional inequality tensorizes and implies dimension-free concentration with rate $\exp(-t^r)$: under
 \eqref{eq:LO}, if $\mu^{\otimes n}(A)\ge \frac12$ then for $t>0$,
 \[
 \mu^{\otimes n}(A+tB_2)\ge 1-e^{-K \left(t/\sqrt{C_{LO(r)}}\right)^r}.
 \]
 (their proof yields $K=1/3$; see also \cite{MR2127729} and Section 6 of \cite{MR2320410} for an extension to a more general setting).
By $\mu_r$ we denote the probability measure on the real line with density
\begin{equation*}
 d\mu_r(t) = \frac{e^{-|t|^r} dt}{2\Gamma(1+1/r)}, \quad t\in\RR.
\end{equation*}
For $r\in(1,2)$,  Lata{\l}a and Oleszkiewicz \cite{MR1796718} showed that  $\mu_r$ satisfies the inequality~\eqref{eq:LO} with a uniformly bounded constant (in this case $d=1$). For these measures, one obtains a dimension-free concentration inequality with a rate corresponding to the tails.

Another approach was suggested by Gentil, Guillin, and Miclo  \cite{MR2198019}, which we present now.
For $r\in (1, 2]$, we say that a probability measure $\mu$ on $\RR^d$ satisfies the \emph{modified log-Sobolev inequality} with parameter $r$ if there exists a constant $C_{mLS(r)}<\infty$ such that for every smooth function $f\colon \RR^d\to(0,\infty)$ one has
\begin{equation}
\label{eq:mLS}
 \Ent_{\mu}(f^2)
 \leq C_{mLS(r)} \int_{\RR^d} H_{r'}\Bigl( \frac{|\nabla f|}{f} \Bigr) f^2 d\mu,
 \end{equation}
where $H_{r'}(t) \coloneqq \max\{t^2, |t|^{r'}\}$ for $t\in\RR$ and $r'\ge 2$ is the dual exponent 
of $r$, defined by $\frac1r+\frac{1}{r'}=1$. This is a natural extension 
of the modified log-Sobolev inequality of Bobkov and Ledoux, see Theorem \ref{theo:Boblov-Ledoux}, which appears as the limit case $r=1$. Indeed, when $r\to 1^+$, $r'\to \infty$ and 
\begin{equation*}
  \lim_{r'\to\infty} H_{r'}(t/c) = 
 \begin{cases}
  t^2/c^2 & \text{if } |t|\leq c,\\
  \infty & \text{if } |t| > c.
 \end{cases}
\end{equation*}

The modified log-Sobolev inequality tensorizes as follows: if $\mu$ satisfies~\eqref{eq:mLS}, then for any positive integer $n$ and every smooth function $f\colon \RR^{dn}\to(0,\infty)$ one has
\begin{equation*}
 \Ent_{\mu^{\otimes n}}(f^2)
 \leq C_{mLS(r)} \int_{\RR^{dn}} \sum_{i=1}^n H_{r'}\Bigl( \frac{|\nabla_i f|}{f} \Bigr) f^2 d\mu^{\otimes n },
 \end{equation*}
where $\nabla_i f$ denotes the partial gradient with respect to the $i$-th $d$-tuple of coordinates of $\RR^{dn}$. It is proved in \cite{MR2198019} that for each $r\in(1,2)$ the measure $\mu_r$ satisfies 
a modified log-Sobolev inequality \eqref{eq:mLS} with parameter $r$.
This allows to recover a two-level concentration inequality of Talagrand \cite{TalConcMeasure},
extending \eqref{eq:Talagrand}: for $r\in (1,2)$, if $\mu_r^{\otimes n}(A)\ge \frac12$, then
\[  \mu_r^{\otimes n}\big( A+\sqrt{t} B_2+t^{\frac1r} B_r\big)\ge 1-e^{-t/C_r}. \]


\medskip

In view of Theorem \ref{theo:Boblov-Ledoux}, it is natural to conjecture, that similarly the Lata{\l}a--Oleszkiewicz inequality implies the modified log-Sobolev inequality (and therefore improved two-level concentration), cf. Remark 21 in \cite{MR2430612}.

\section{Main result and organization of the article}

We are ready to state our main result. Let us emphasize that it is not restricted to measures on the real line and that the dimension $d$ does not enter into the dependence of constants.

\begin{theorem}
\label{thm:main:Toulouse}
Let $r\in(1,2)$.
Let $\mu$ be
a probability measure on $\RR^d$ which satisfies the Lata\l{}a--Oleszkiewicz inequality~\eqref{eq:LO} with parameter $r$ and with constant $C_{LO(r)}$. Then $\mu$ satisfies the modified log-Sobolev inequality~\eqref{eq:mLS} with parameter $r$ and a constant  $C$ depending only on $C_{LO(r)}$ and~$r$. 
More precisely, one can take $C=\alpha(r) \max\{ C_{LO(r)},C_{LO(r)}^{1/r}\}$ for some function $\alpha$. 
\end{theorem}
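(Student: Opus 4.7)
The natural attempt to derive \eqref{eq:mLS} from \eqref{eq:LO} is to send $\theta\to2^-$. A short Taylor expansion shows that, for $f$ with $\int f^2\,d\mu=1$,
$\int_{\RR^d} f^2\,d\mu - (\int_{\RR^d}|f|^\theta\,d\mu)^{2/\theta} = \tfrac12\Ent_\mu(f^2)(2-\theta) + O((2-\theta)^2)$,
so \eqref{eq:LO} yields, to leading order, $\Ent_\mu(f^2)\le 2C_{LO(r)}(2-\theta)^{1-2/r}\int|\nabla f|^2\,d\mu + O(2-\theta)$. Because $1-2/r<0$ for $r<2$, the right-hand side diverges while the left-hand side stays bounded: the limit $\theta\to 2^-$ carries no useful information, and one must keep $\theta$ away from $2$ and choose it adaptively in terms of $f$.

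My plan is to realize this adaptive choice through a dyadic decomposition along the level-sets of $f$. After standard approximation it suffices to consider smooth, strictly positive, bounded $f$ with $\int f^2\,d\mu=1$. Partition the domain by $\log f^2$: for $k\in\mathbb{Z}$ set $\Omega_k\coloneqq\{e^k\le f^2<e^{k+1}\}$. The entropy decomposes as $\Ent_\mu(f^2)=\sum_k\int_{\Omega_k}f^2\log f^2\,d\mu$, the $k$-th contribution being of order $|k|\int_{\Omega_k}f^2\,d\mu$. For each $k$ apply \eqref{eq:LO} to a truncation that isolates level~$k$---for instance $g_k\coloneqq\bigl((f\wedge e^{(k+1)/2})\vee e^{k/2}\bigr)-e^{k/2}$---with a level-dependent parameter $\theta_k$. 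The LHS of the resulting LO$(r)$ inequality controls $\mu(\{f^2\ge e^{k+1}\})$ (and hence the $k$-th slice of the entropy), while the RHS is bounded by $\int_{\Omega_k}|\nabla f|^2\,d\mu$; choosing $\theta_k$ so that $(2-\theta_k)^{2-2/r}$ matches the level-wise energy-to-mass ratio optimizes each level.

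Two regimes then appear when summing. On levels where the local slope $|\nabla f|/f$ is bounded, the LO$(r)$ bound degenerates to a Poincar\'e-type estimate and yields the $|\nabla f|^2$ branch of $H_{r'}$, producing a factor $C_{LO(r)}$ in the final constant. On levels where $|\nabla f|/f$ is large, $\theta_k$ must be pushed towards~$1$ and optimization of $(2-\theta_k)^{2-2/r}$ against the level-wise gradient energy generates the $|\nabla f|^{r'}f^{2-r'}$ branch of $H_{r'}$, together with the scaling $C_{LO(r)}^{1/r}$; this is where the exponent $1/r$ in the theorem originates. Taking the maximum of the two regimes reproduces the stated constant $\alpha(r)\max\{C_{LO(r)},C_{LO(r)}^{1/r}\}$.

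The main obstacle is the level-wise summation carried out rigorously: one must check that the $\theta_k$ lead to geometrically convergent sums, that the cross-terms arising at the level boundaries do not inflate the constant, and, most importantly, that the optimization really reconstructs the piecewise function $H_{r'}(|\nabla f|/f)$ rather than a coarser combination of $L^2$ and $L^{r'}$ gradient norms. The case $C_{LO(r)}\le1$, where the scaling $C_{LO(r)}^{1/r}$ dominates, requires rescaling the dyadic grid and is likely to be the most delicate part of the argument.
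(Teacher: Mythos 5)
Your plan has a genuine gap at its core. The level-set decomposition with truncations $g_k$ and an adaptive parameter $\theta_k$ is, in essence, a derivation of the capacity criterion $\mu(A)\log^{2/r'}(1+1/\mu(A))\lesssim C_{LO(r)}\operatorname{Cap}_\mu(A)$ and, after summation over levels, of the $F_r$-Sobolev inequality \eqref{eq:FSob}; this is exactly the first step of the paper's proof (Theorem~\ref{thm:LO-equiv-forms}, quoting Barthe--Cattiaux--Roberto). But this step cannot by itself produce the energy $\int H_{r'}(|\nabla f|/f)f^2\,d\mu$: the parameter $\theta_k$ is a single number on each slice, and its optimal value is governed by the height of $f$ there (more precisely by $\mu(\{f^2\ge e^k\})$), not by the pointwise ratio $|\nabla f|/f$. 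Whatever choice of $\theta_k$ you make, the outcome is an inequality whose energy term has the form $\int|\nabla f|^2\,W\bigl(f^2/\int f^2 d\mu\bigr)\,d\mu$ for some logarithmic weight $W$ --- i.e.\ an inequality of type \eqref{eq:FSob} or \eqref{eq:Itau} --- and your two ``regimes'' (bounded versus large slope) are simply not visible to the decomposition, which only sees slice-averaged energy-to-mass ratios. The point you flag as ``most important to check'' is therefore not a summation technicality but the actual content of the theorem.

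The missing mechanism is a nonlinear change of unknown combined with a pointwise Young-type inequality. The paper passes from the (defective) $F_r$-Sobolev inequality to the inequality $I(2/r')$ of \eqref{eq:Itau} by substituting $g=\sqrt{\Phi(f/L)}$ with $\Phi(x)=x^2\log^{1-2/r'}(e+x^2)$ and $L$ the associated Luxemburg norm (Theorem~\ref{th:FSob_to_Itau}), and then invokes Barthe--Kolesnikov (Theorem~\ref{th:barthe-kolesnikov}), whose proof trades the term $|\nabla f|^2\log^{1-2/r'}\bigl(e+f^2/\int f^2d\mu\bigr)$ for $H_{r'}(|\nabla f|/f)f^2$ plus a small multiple of $\Ent_\mu(f^2)$ via a pointwise convexity/duality argument; the Poincar\'e inequality (obtained from \eqref{eq:LO} by letting $\theta\to1^+$) is then needed to tighten the resulting defective inequality. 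This is where the piecewise structure of $H_{r'}$ and the ratio $|\nabla f|/f$ actually enter. Until you supply an analogue of this step, the dyadic argument stalls one inequality short of \eqref{eq:mLS}.
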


 The concentration property can also be formulated in terms of functions. As shown in \cite{MR1796718}, the Lata{\l}a--Oleszkiewicz inequality \eqref{eq:LO} implies that  for any  integer $n\ge 1$ and every $1$-Lipschitz function $f\colon\RR^{dn}\to\RR$, one has
\begin{equation}
\label{eq:conc-LO}
 \mu^{\otimes n}\Bigl( \bigl|f-\int_{\RR^{dn}} f\, d\mu^{\otimes n} \bigr| \geq t\sqrt{C_{LO(r)}} \Bigr)
 \leq 2 \exp(-K \min\{t^2, t^r\}).
\end{equation}
The modified log-Sobolev inequality \eqref{eq:mLS} implies via a modification of Herbst's argument a stronger deviation inequality.
Therefore our main theorem ensures that the Lata{\l}a--Oleszkiewicz inequality ensures such
an improved concentration. This is the content of the next two corollaries, for which some notation is needed.
%
%
For $x=(x_1,\ldots,x_n)\in(\RR^d)^n$ and $p\in(1,\infty)$ denote
\begin{equation*}
 \|x \|_{p,2} \coloneqq \Bigl(\sum_{i=1}^n |x_i|^p\Bigr)^{1/p}
\end{equation*}
(here $|\cdot|$ stands for the $\ell_2$ norm on $\RR^d$; in the notation we suppress the roles of $d$ and $n$, but they will always be clear from the context).


\begin{corollary}
\label{cor:Toul-conc-functions}
Let $r\in (1,2)$. 
Let $\mu$ be
a probability measure on $\RR^d$ which satisfies the Lata\l{}a--Oleszkiewicz inequality~\eqref{eq:LO} with parameter $r$ and with constant $C_{LO}$. Then there exists a constant $C>0$, depending only on $C_{LO}$ and $r$, such that for any positive integer $n$, any smooth $f\colon \RR^{dn}\to\RR$, and all $t>0$,
\begin{equation*}
\begin{split}
&\mu^{\otimes n}\Bigl( \bigl|f-\int_{\RR^{dn}} f d\mu^{\otimes n} \bigr| \geq t \Bigr)\\
&\quad  \leq 2 \exp\Bigl(-\frac12 \min\Bigl\{
\frac{t^2}{C\sup_{x\in\RR^{dn}} |\nabla f(x)|^2}, \frac{t^r}{C^{r-1}\sup_{x\in(\RR^{d})^n} \|\nabla f(x) \|_{r',2}^r}\Bigr\}\Bigr).
\end{split}
\end{equation*}
\end{corollary}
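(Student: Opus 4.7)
My plan is to derive the corollary from Theorem~\ref{thm:main:Toulouse} via a tensorized modified Herbst argument, in the spirit of Bobkov--Ledoux and Gentil--Guillin--Miclo.

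First, Theorem~\ref{thm:main:Toulouse} will provide the modified log-Sobolev inequality~\eqref{eq:mLS} for $\mu$ with parameter $r$ and a constant $C_{mLS}$ controlled by $\alpha(r)\max\{C_{LO}, C_{LO}^{1/r}\}$. Its tensorization (recalled after~\eqref{eq:mLS}) lifts this to $\mu^{\otimes n}$. After translating $f$, I may assume $\int f\, d\mu^{\otimes n} = 0$. For $\lambda>0$, I apply the tensorized inequality to $g = \exp(\lambda f/2)$, so that $g^2 = e^{\lambda f}$ and $|\nabla_i g|/g = \lambda|\nabla_i f|/2$. Using the pointwise bound $H_{r'}(s)\le s^2 + |s|^{r'}$, together with $\sum_{i=1}^n |\nabla_i f|^2 = |\nabla f|^2$ and $\sum_{i=1}^n |\nabla_i f|^{r'} = \|\nabla f\|_{r',2}^{r'}$, the right-hand side of the modified log-Sobolev inequality is bounded by
\[
\frac{C_{mLS}\lambda^2}{4}\int|\nabla f|^2 e^{\lambda f}\,d\mu^{\otimes n} + \frac{C_{mLS}\lambda^{r'}}{2^{r'}}\int \|\nabla f\|_{r',2}^{r'} e^{\lambda f}\, d\mu^{\otimes n}.
\]

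Next, I set $\varphi(\lambda)=\int e^{\lambda f}\,d\mu^{\otimes n}$ and $\psi(\lambda)=\lambda^{-1}\log\varphi(\lambda)$, so that the classical Herbst identity gives $\Ent_{\mu^{\otimes n}}(e^{\lambda f})=\lambda^2\varphi(\lambda)\psi'(\lambda)$ with $\psi(0^+)=\int f\,d\mu^{\otimes n}=0$. Bounding the integrands by their suprema $A:=\sup|\nabla f|^2$ and $B^{r'/r}:=\sup\|\nabla f\|_{r',2}^{r'}$ (where $B=\sup\|\nabla f\|_{r',2}^{r}$), I obtain a differential inequality $\psi'(\lambda)\le \tfrac14 C_{mLS}A + 2^{-r'}C_{mLS}B^{r'/r}\lambda^{r'-2}$, which integrates to
\[
\log\varphi(\lambda) \le \frac{C_{mLS}A}{4}\lambda^2 + \frac{C_{mLS}B^{r'/r}}{2^{r'}(r'-1)}\lambda^{r'}.
\]

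Finally, I apply Markov's inequality and optimize in $\lambda$. Choosing $\lambda$ as the smaller of $t/(c_1 A)$ and $(t/(c_2 B^{r'/r}))^{1/(r'-1)}$ for suitable $c_1,c_2$ depending on $C_{mLS}$ and $r$ makes each term on the right of the above bound at most $\lambda t/2$, so $\mu^{\otimes n}(f\ge t)\le\exp(-\lambda t/2)$. Using the dualities $r'=r/(r-1)$, $r'/r=1/(r-1)$, and $r'/(r'-1)=r$, the two regimes of this minimum correspond respectively to $t^2/(2CA)$ and $t^r/(2C^{r-1}B)$, for an appropriate $C=C(C_{LO}, r)$. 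Repeating the argument for $-f$ yields the two-sided bound with a factor $2$ outside. The main obstacle is the exponent bookkeeping in the last step: the factor $B^{r'/r}$ appearing in the Herbst bound must be recombined to produce $B$ at the exponent $r-1$ in the denominator; this requires careful use of the dualities between $r$ and $r'$ but presents no conceptual difficulty.
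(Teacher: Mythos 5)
Your proposal is correct and follows essentially the same route as the paper: Theorem~\ref{thm:main:Toulouse} plus tensorization of the modified log-Sobolev inequality, a Herbst-type differential inequality for $\lambda^{-1}\log\int e^{\lambda f}\,d\mu^{\otimes n}$, and optimization in $\lambda$. The only differences are cosmetic (the paper bounds $\sum_i H_{r'}$ by $2\max\{\sum_i a_i^2,\sum_i b_i^{r'}\}$ and packages the final optimization via the Legendre transform $H_{r'}^*$ and Lemma~\ref{lem:H-H*}, whereas you use $H_{r'}(s)\le s^2+|s|^{r'}$ and optimize by hand; note only that to land on the exponent $-\lambda t/2$ each of your two terms should be made at most $\lambda t/4$, a harmless adjustment of $c_1,c_2$).
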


Using standard smoothing arguments one can also obtain a result for not necessarily smooth functions, expressed in terms of their Lipschitz constants.

\begin{corollary}
\label{cor:Toul-conc-functions-Lip}
Let $r\in(1,2)$. Let $\mu$ be
a probability measure on $\RR^d$ which satisfies the Lata\l{}a--Oleszkiewicz inequality~\eqref{eq:LO} with parameter $r$ and constant $C_{LO}$. Then there exists a constant $C>0$, depending only on $C_{LO}$ and $r$, such that for any positive integer  $n$ the following holds: if $f\colon \RR^{dn}\to\RR$ satisfies
\begin{align*}
 |f(x) - f(y)| &\leq L_2 |x-y|,\\
 |f(x) - f(y)| &\leq L_{r,2} \|x-y\|_{r,2},
\end{align*}
for all $x,y\in (\RR^d)^n$, then for all $t>0$
\begin{equation*}
\mu^{\otimes n}\Bigl( \bigl|f-\int_{\RR^{dn}} f d\mu^{\otimes n} \bigr| \geq t \Bigr)
\leq 2 \exp\Bigl(-\frac12 \min\Bigl\{
\frac{t^2}{CL_2^2}, \frac{t^r}{C^{r-1}L_{r,2}^r}\Bigr\}\Bigr).
\end{equation*}
\end{corollary}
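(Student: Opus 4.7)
The plan is to obtain this as a direct consequence of Corollary~\ref{cor:Toul-conc-functions} via a mollification argument. Fix a smooth, compactly supported probability density $\varphi$ on $\RR^{dn}$ and set $\varphi_\epsilon(x) = \epsilon^{-dn}\varphi(x/\epsilon)$; define $f_\epsilon := f \ast \varphi_\epsilon$. Then $f_\epsilon$ is smooth, and since convolution with a probability density preserves the Lipschitz constant with respect to any fixed norm, $f_\epsilon$ inherits both Lipschitz bounds of $f$, with the same constants $L_2$ and $L_{r,2}$.

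Next I would translate these Lipschitz bounds into pointwise bounds on $\nabla f_\epsilon$ in the appropriate dual norms. For a smooth function that is $L$-Lipschitz with respect to a norm $N$ on $\RR^{dn}$, one has $\|\nabla f_\epsilon(x)\|_{N^\ast}\le L$ at every point, where $N^\ast$ denotes the dual norm. Applied to the Euclidean norm (which is self-dual) this yields $\sup_x |\nabla f_\epsilon(x)| \le L_2$; applied to $\|\cdot\|_{r,2}$ it yields $\sup_x \|\nabla f_\epsilon(x)\|_{r',2} \le L_{r,2}$, using that the dual of $\|\cdot\|_{r,2}$ on $(\RR^d)^n$ is precisely $\|\cdot\|_{r',2}$ (Hölder duality between $\ell_r$ and $\ell_{r'}$ across the $n$ blocks combined with the self-duality of the Euclidean norm inside each block). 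Corollary~\ref{cor:Toul-conc-functions} then produces the desired deviation inequality for $f_\epsilon$ with the suprema of the respective gradient norms controlled by $L_2$ and $L_{r,2}$.

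To conclude I would pass to the limit $\epsilon\to 0^+$. Since $f$ has at most affine growth (from the $L_2$-Lipschitz assumption) and $\mu^{\otimes n}$ has finite first moment — which is guaranteed by the concentration inequality~\eqref{eq:conc-LO} already derived from the Lata\l{}a--Oleszkiewicz inequality — dominated convergence gives $\int f_\epsilon\,d\mu^{\otimes n} \to \int f\,d\mu^{\otimes n}$, while $f_\epsilon\to f$ uniformly on compact sets. Applying the concentration inequality for $f_\epsilon$ at level $t-\delta$, taking $\liminf$ in $\epsilon$, and then letting $\delta\to 0^+$ recovers the stated bound.

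The only substantive step is the duality identification in the second paragraph, namely recognizing that the Lipschitz bound with respect to $\|\cdot\|_{r,2}$ converts exactly into a gradient bound in $\|\cdot\|_{r',2}$, which is the norm that appears in Corollary~\ref{cor:Toul-conc-functions}. Once this duality is in place, the mollification, the preservation of Lipschitz constants under convolution, and the limiting procedure are entirely routine.
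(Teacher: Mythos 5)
Your proposal is correct and follows essentially the same route as the paper: smooth $f$ by convolution (the paper uses a Gaussian kernel rather than a compactly supported mollifier), note that the Lipschitz bounds pass to the mollification and dualize into pointwise bounds $|\nabla f_\varepsilon|\le L_2$ and $\|\nabla f_\varepsilon\|_{r',2}\le L_{r,2}$, apply Corollary~\ref{cor:Toul-conc-functions}, and let $\varepsilon\to 0$. Your limiting step can be streamlined: since $f$ is $L_2$-Lipschitz, $f_\varepsilon\to f$ uniformly on all of $\RR^{dn}$ (not just on compacts), which immediately gives convergence of the means and of the deviation probabilities without invoking moment bounds.
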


One can also express concentration in terms of enlargements of sets. Below $B_2^{dn}$ and $B_r^{dn}$ stand for the unit balls in the $\ell_2$ and $\ell_r$-norms on $\RR^{dn}$, respectively. Also, let
\begin{equation*}
B_{r,2}^{n,d} \coloneqq \Bigl\{(x_1,\ldots, x_n) \in(\RR^d)^n : \bigl(\sum_{i=1}^n |x_i|^r\bigr)^{1/r} \leq 1 \Bigr\}
\end{equation*}
be the unit ball in the norm $\|\cdot\|_{r,2}$.
Observe that for $r\in(1,2)$,
 \begin{equation*}
 d^{1/2-1/r}  B_{r,2}^{n,d}\subset B_r^{dn} \subset B_{r,2}^{n,d} \subset B_2^{dn} \subset n^{1/r-1/2}  B_{r,2}^{n,d}.
 \end{equation*}

\begin{corollary}
\label{cor:Toul-conc-set}
Let $\mu$ be
a probability measure on $\RR^d$ which satisfies the Lata\l{}a--Oleszkiewicz inequality~\eqref{eq:LO} with parameter $r$ and  constant $C_{LO}$. Then there exists a constant $K>0$, depending only on $C_{LO}$ and $r$, such that for any positive integer $n$ and any set $A\subset \RR^{dn}$ with $\mu^{\otimes n}(A)\geq 1/2$,
\begin{equation*}
\mu^{\otimes n}\Bigl( A +\Bigl\{(x_1,\ldots,x_n)\in(\RR^d)^n : \sum_{i=1}^n \min\{|x_i|^2,|x_i|^r\} \leq t \Bigr\} \Bigr)
 \geq 1-e^{- Kt}.
\end{equation*}
In particular,
\begin{equation}
\label{eq:cor-conc-set-mixed-ball}
\mu^{\otimes n}\bigl( A +\sqrt{t} B_2^{dn} + t^{1/r} B_{r,2}^{n,d} \bigr)
 \geq 1-e^{- Kt}.
\end{equation}
One can take $K=\frac{1}{32}\min\{1/C,1/C^{r-1}\}$, where $C$ is taken from Theorem \ref{thm:main:Toulouse}. 
\end{corollary}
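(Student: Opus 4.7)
The plan is to derive the concentration from the modified log-Sobolev inequality provided by Theorem~\ref{thm:main:Toulouse} via a Herbst-type argument applied to a two-level distance-to-$A$ function. First, I would invoke Theorem~\ref{thm:main:Toulouse} to replace the Lata\l{}a--Oleszkiewicz hypothesis by the modified log-Sobolev inequality~\eqref{eq:mLS} with parameter $r$ and constant $C$, and then tensorize it to
\[
\Ent_{\mu^{\otimes n}}(f^2) \le C\int \sum_{i=1}^n H_{r'}\bigl(|\nabla_i f|/f\bigr) f^2\, d\mu^{\otimes n}
\]
for all smooth $f\colon\RR^{dn}\to(0,\infty)$.

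Given $A\subset\RR^{dn}$ with $\mu^{\otimes n}(A)\ge 1/2$, I would introduce the cost
\[
U(x) \coloneqq \inf_{y\in A} \sum_{i=1}^n \min\bigl(|x_i-y_i|^2,\, |x_i-y_i|^r\bigr),
\]
so that $\{U\le t\}$ equals the enlargement $A+\{x:\sum_i \min(|x_i|^2,|x_i|^r)\le t\}$ appearing in the first assertion, reducing the problem to the tail bound $\mu^{\otimes n}(U>t)\le e^{-Kt}$. A short case analysis at a minimizer $y^\ast(x)$, separating the regimes $|x_i-y^\ast_i|\le 1$ (where $U_i=|x_i-y^\ast_i|^2$ and $|\nabla_i U|=2|x_i-y^\ast_i|$) from $|x_i-y^\ast_i|\ge 1$ (where $U_i=|x_i-y^\ast_i|^r$ and $|\nabla_i U|=r|x_i-y^\ast_i|^{r-1}$), yields the pointwise bounds
\[
|\nabla_i U(x)|^2 \le 4\,U_i(x), \qquad |\nabla_i U(x)|^{r'} \le 2^{r'}\,U_i(x),
\]
where $\sum_i U_i = U$. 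These rely on the duality $(r-1)r'=r$, on $|z|^{r-2}\le 1$ for $|z|\ge 1$ (since $r<2$), and on $|z|^{r'-2}\le 1$ for $|z|\le 1$ (since $r'>2$).

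Next, I would apply the tensorized modified log-Sobolev inequality to $f=e^{\lambda U/2}$ (after a mollification/truncation step to ensure integrability). Since $H_{r'}(t)\le t^2+t^{r'}$, the gradient bounds sum to $\sum_i H_{r'}(\lambda|\nabla_i U|/2)\le c_r(\lambda^2+\lambda^{r'})\,U$, yielding the differential inequality
\[
\lambda Z'(\lambda) - Z(\lambda)\log Z(\lambda) \le C'(\lambda^2+\lambda^{r'})\,Z'(\lambda),\qquad Z(\lambda)\coloneqq \int e^{\lambda U}\,d\mu^{\otimes n}.
\]
The hard part will be extracting a usable bound on $Z(\lambda)$ from this \emph{self-referential} inequality, since its right-hand side features $Z'$ rather than $Z$ as in the classical Herbst setting. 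I would address this by studying $V(\lambda)\coloneqq\log Z(\lambda)/\lambda$ on an interval $(0,\lambda_\ast]$ with $\lambda_\ast$ of order $\min(1/C,1/C^{r-1})$; a direct computation then shows that $V'$ is controlled in terms of $V$, while the initial value $V(0^+)=\EE U$ is controlled by combining the tail bound being bootstrapped with the fact that $\mu^{\otimes n}(U=0)\ge 1/2$. Applying Markov's inequality at $\lambda=\lambda_\ast$ delivers $\mu^{\otimes n}(U>t)\le e^{-Kt}$ with the stated $K=\tfrac{1}{32}\min(1/C,1/C^{r-1})$.

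Finally, the ball form~\eqref{eq:cor-conc-set-mixed-ball} follows from the elementary inclusion $\sqrt{t}\,B_2^{dn}+t^{1/r}\,B_{r,2}^{n,d}\subseteq\{x:\sum_i\min(|x_i|^2,|x_i|^r)\le 8t\}$: any decomposition $z=u+v$ with $|u|_2\le \sqrt{t}$ and $\|v\|_{r,2}\le t^{1/r}$ satisfies $\min(|z_i|^2,|z_i|^r)\le 4|u_i|^2+4|v_i|^r$ for each $i$ (by separately comparing $|u_i|$ and $|v_i|$), and summation gives the claim; the factor $8$ is absorbed into $K$.
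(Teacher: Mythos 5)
Your setup (reduction to a tail bound for the two-level distance $U$, and the pointwise gradient bounds $|\nabla_i U|^2\le 4U_i$, $|\nabla_i U|^{r'}\le 2^{r'}U_i$) matches the paper's. The gap is in what you do next. Applying the tensorized modified log-Sobolev inequality to $e^{\lambda U/2}$ with the \emph{untruncated} $U$ produces, as you yourself note, a differential inequality whose right-hand side involves $Z'(\lambda)=\int U e^{\lambda U}d\mu^{\otimes n}$ rather than $Z(\lambda)$, and your plan for resolving it is not a proof: the initial value $\EE U$ is not available (it could be infinite a priori --- its finiteness, and indeed the finiteness of $Z(\lambda)$ for any $\lambda>0$, is essentially equivalent to the concentration estimate you are trying to establish), and ``combining the tail bound being bootstrapped'' is circular as stated. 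The paper sidesteps all of this with the Bobkov--Ledoux truncation: one works with $f=\min\{F_A,t\}$, for which the same gradient computation gives \emph{global} Lipschitz bounds $\sum_i|\nabla_i f|^2\le 4t$ and $\sum_i|\nabla_i f|^{r'}\le 2^{r'}t$ (the gradient vanishes where $F_A>t$), so the already-established Corollary~\ref{cor:Toul-conc-functions-Lip} applies directly with $L_2=2\sqrt t$ and $L_{r,2}=2t^{1/r'}$, and $\{F_A\ge t\}\subset\bigl\{f\ge\int f\,d\mu^{\otimes n}+t/2\bigr\}$ because $\int f\,d\mu^{\otimes n}\le t/2$. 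That is also where the explicit constant $K=\frac1{32}\min\{1/C,1/C^{r-1}\}$ comes from; your sketch asserts the same constant without a computation that could produce it.

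A second, smaller error: your inclusion for the ball form points the wrong way. To deduce \eqref{eq:cor-conc-set-mixed-ball} from the first assertion you need the enlargement $\bigl\{x:\sum_i\min\{|x_i|^2,|x_i|^r\}\le t\bigr\}$ to be \emph{contained in} $\sqrt t\,B_2^{dn}+t^{1/r}B_{r,2}^{n,d}$ (split each $x$ into the coordinates with $|x_i|\le1$ and those with $|x_i|>1$). The inclusion you state, $\sqrt t\,B_2^{dn}+t^{1/r}B_{r,2}^{n,d}\subseteq\bigl\{\sum_i\min\{|x_i|^2,|x_i|^r\}\le 8t\bigr\}$, only yields an upper bound on the measure of the sum set, which is useless for the lower bound being claimed.
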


This corollary should be compared with the results of Gozlan \cite{MR2682264}.
He proved that if a probability  measure $\mu$ on $\RR^d$ satisfies the Lata\l{}a--Oleszkiewicz inequality, then it satisfies a Poincar\'e type  inequality involving  a non-standard length the gradient (see Corollary~5.17 in \cite{MR2682264}), which in turn implies a slightly different type of two-level concentration (see Proposition~2.4 and Proposition~1.2 in \cite{MR2682264}). However, unlike in the above two corollaries, the constants which appear in his formulations do depend on the dimension $d$ of the underlying space (even though they do not depend on $n$).
Namely, if we denote $x_i = (x_i^1, \ldots, x_i^d)\in\RR^d$ for $i=1,\ldots, n$, then \cite{MR2682264}  shows the existence of  a constant $K>0$ (depending only on $C_{LO}$ and $r$) such that for any positive integer $n$ and any set $A\subset \RR^{dn}$ with $\mu^{\otimes n}(A)\geq 1/2$,
\begin{align*}
\mu^{\otimes n}\Bigl( A +\Bigl\{(x_1,\ldots,x_n)\in(\RR^d)^n : \sum_{i=1}^n \sum_{j=1}^d \min\Bigl\{\Bigl|\frac{x_i^j}{d}\Bigr|^2,\Bigl|\frac{x_i^j}{d}\Bigr|^r\Bigr\} \leq t \Bigr\} \Bigr)\\
\geq 1-e^{-Kt/d}
\end{align*}
(the $d$ in the denominator on the left-hand side comes from Corollary 5.17 of \cite{MR2682264} and the $d$ on the right-hand side---from Proposition~2.4 therein). In particular, this implies
\begin{equation*}
\mu^{\otimes n}\bigl( A +d^{3/2}\sqrt{t} B_2^{dn} + d^{1+1/r} t^{1/r} B_{r}^{dn} \bigr)
 \geq 1-e^{-Kt}.
\end{equation*}
In terms of the dependence on $d$ this is weaker than \eqref{eq:cor-conc-set-mixed-ball}, since
\begin{equation*}
B_{r,2}^{n,d} \subset d^{1/r-1/2} B_{r}^{dn} \subset d^{1+1/r} B_{r}^{dn},
\end{equation*}
with strict  inclusions when $d\geq 2$.

 \medskip
 
The organization of the rest of the article is the following. In Section~\ref{sec:LO-prelim} we introduce two more functional inequalities. They will serve as intermediate steps between the Lata{\l}a--Oleszkiewicz and the modified log-Sobolev inequalities. In Section~\ref{sec:proof-of-main-and-cor} we prove the main result and Corollaries~\ref{cor:Toul-conc-functions} and~\ref{cor:Toul-conc-set}. 
The rest of the paper deals with measures on the real line. One motivation is to make progress on a question that we do not fully settle: our main theorem shows an implication between two properties; are they actually equivalent? In Section~\ref{sec:R}, we recall the known criteria.
In Section~\ref{sec:weighted} we consider the weighted log-Sobolev inequalities
used by \cite{MR2198019} in order to derive \eqref{eq:mLS}. We show that the two properties are not equivalent. Workable criteria are available for measures on $\RR$ with strictly increasing potential close to $\infty$. In the final section, we develop an elementary approach to deal with potential with vanishing derivatives or even decreasing parts. We illustrate this method on the functional inequalities of interest.

\section{Preliminaries: a few more inequalities}
\label{sec:LO-prelim}

We start with the following observation.

\begin{lemma}
\label{lem:LO-to-Poincare}
 Suppose that  a probability measure $\mu$  on $\RR^d$  satisfies the Lata\l{}a--Oleszkiewicz inequality~\eqref{eq:LO} with constant $C_{LO}$. Then it satisfies the Poincar\'e inequality \eqref{eq:Poinc-intro} with constant $C_P = C_{LO}$.
\end{lemma}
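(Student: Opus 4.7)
The approach is to take the limit $\theta\to 1^+$ in the Lata\l{}a--Oleszkiewicz inequality~\eqref{eq:LO} and then correct the resulting $(\int|f|\,d\mu)^2$ into $(\int f\,d\mu)^2$ by a constant shift. This bypasses having to extract the quadratic part of \eqref{eq:LO} via a linearization around constants (which is an equally viable, slightly more computational alternative).

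Fix a bounded smooth $f\colon\RR^d\to\RR$. Since the supremum over $\theta$ in~\eqref{eq:LO} dominates the ratio at every individual $\theta\in(1,2)$, it is enough to send $\theta\to 1^+$. Dominated convergence gives $(\int|f|^\theta\, d\mu)^{2/\theta}\to(\int|f|\,d\mu)^2$, and clearly $(2-\theta)^{2(1-1/r)}\to 1$. Hence \eqref{eq:LO} yields the weaker bound
\[
\int f^2\, d\mu - \Bigl(\int|f|\, d\mu\Bigr)^2 \le C_{LO}\int|\nabla f|^2\, d\mu.
\]

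To recover the variance on the left-hand side, I would apply this inequality to $f+c$ in place of $f$, with $c\ge\|f\|_\infty$ so that $f+c\ge 0$. Then $|f+c|=f+c$, $\int|f+c|\,d\mu=c+\int f\,d\mu$, and a direct expansion shows that the displayed left-hand side becomes $\Var_\mu(f+c)=\Var_\mu(f)$. Since $|\nabla(f+c)|=|\nabla f|$, this gives $\Var_\mu(f)\le C_{LO}\int|\nabla f|^2\,d\mu$ for every bounded smooth $f$. A standard truncation argument (e.g.\ composing $f$ with smooth $1$-Lipschitz cutoffs $\psi_N$ agreeing with the identity on $[-N,N]$, and letting $N\to\infty$ via monotone convergence) then extends the bound to arbitrary smooth $f$, yielding the Poincar\'e inequality~\eqref{eq:Poinc-intro} with $C_P=C_{LO}$.

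The argument is essentially mechanical; the only minor care needed is in justifying the $\theta\to 1^+$ limit (immediate from boundedness of $f$) and in the truncation step for unbounded test functions, so there is no real obstacle.
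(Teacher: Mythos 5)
Your proof is correct and follows essentially the same route as the paper: take $\theta\to 1^+$ in \eqref{eq:LO}, use a constant shift (translation invariance of the variance) to turn $(\int|f|\,d\mu)^2$ into $(\int f\,d\mu)^2$, and extend to general smooth $f$ by truncation. No issues.
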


\begin{proof}
By taking $\theta \to 1^+$ in~\eqref{eq:LO} we see that \eqref{eq:Poinc-intro}  holds for all positive smooth functions (with constant $C_{LO}$). Since the variance is translation invariant, we conclude that \eqref{eq:Poinc-intro}  holds for all smooth functions bounded from below. The general case follows by approximation.
\end{proof}

\begin{remark}
 Alternatively, one can deduce the Poincar\'e inequality from the fact that Inequality~\eqref{eq:LO} implies dimension-free concentration and the results of \cite{MR3311918}.
\end{remark}

  For $r\in(1,2)$ denote
 \begin{equation*}
  F_r(t) = \log^{2/r'}(1+t)-\log^{2/r'}(2), \quad t\geq 0.
 \end{equation*} 
We say that a probability measure $\mu$ on $\RR^d$ satisfies an \emph{$F_r$-Sobolev inequality} if there exists $C$ such that for every smooth $g\colon\RR^d\to\RR$,
 \begin{equation}
 \label{eq:FSob}
  \int_{\RR^d} g^2 F_r\Bigl(\frac{g^2}{\int_{\RR^d} g^2 d\mu} \Bigr) d\mu 
  \leq C \int_{\RR^d} |\nabla g|^2 d\mu.
 \end{equation}
 This inequality is \emph{tight}, i.e., we have equality for constant functions (if $f$ is constant and equal to zero on its support, then the expression $0/0$ should be interpreted as $0$ here and in  \eqref{eq:Itau} below).
 We say that $\mu$ on $\RR^d$ satisfies a \emph{defective $F_r$-Sobolev inequality} if there exists $B$ and $C$ such that for every smooth $g\colon\RR^d\to\RR$,
 \begin{equation}
 \label{eq:defective_FSob}
  \int_{\RR^d} g^2 F_r\Bigl(\frac{g^2}{\int_{\RR^d} g^2 d\mu} \Bigr) d\mu 
  \leq B \int_{\RR^d} g^2 d\mu +  C \int_{\RR^d} |\nabla g|^2 d\mu.
 \end{equation}
 
In \cite{MR2320410} Barthe, Cattiaux, and Roberto provided capacity criteria for, among others, the Lata\l{}a--Oleszkiewicz and $F_r$-Sobolev inequalities. We refer to Section~5 of \cite{MR2320410} for a thorough overview of the topic, and in particular to the  diagram on page 1041, which we use as a road map. The following theorem is a direct corollary of the results contained therein (and also in Wang's independent paper \cite{MR2127729}).

\begin{theorem}
  \label{thm:LO-equiv-forms}
  Let $r\in(1,2)$,  and let $\mu$ be an absolutely continuous probability measure $\mu$  on $\RR^d$. 
 Assume that  $\mu$ satisfies the Lata\l{}a--Oleszkiewicz inequality~\eqref{eq:LO} with parameter $r\in(1,2)$ and constant $C_{LO}$.
 Then $\mu$ satisfies the (tight) $F_r$-Sobolev inequality \eqref{eq:FSob} with a constant $C\le 1152 \,C_{LO}$.
 \end{theorem}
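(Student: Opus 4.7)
The plan is to proceed through the capacity-based framework of Barthe--Cattiaux--Roberto \cite{MR2320410} (and Wang \cite{MR2127729}), in which both the Latała--Oleszkiewicz inequality and the $F_r$-Sobolev inequality are known to be equivalent, up to universal multiplicative constants, to a single capacity-type lower bound of the form
\begin{equation*}
 \mathrm{Cap}_\mu(\Omega) \geq \frac{1}{K} \mu(\Omega) \log^{2/r'}\Bigl(1 + \frac{1}{\mu(\Omega)}\Bigr)
\end{equation*}
for all Borel sets $\Omega$ with $\mu(\Omega) \leq 1/2$. The proof will therefore have three stages: pass from \eqref{eq:LO} to this capacity bound, from the capacity bound to a defective $F_r$-Sobolev inequality \eqref{eq:defective_FSob}, and from the defective form to the tight inequality \eqref{eq:FSob} by invoking Lemma~\ref{lem:LO-to-Poincare}.

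For the first stage, I would fix $\Omega \subset \Omega'$ with $\mu(\Omega') \leq 1/2$ and a Lipschitz $f$ with $\ind{\Omega} \leq f \leq \ind{\Omega'}$, and apply \eqref{eq:LO} at a single $\theta \in (1,2)$ chosen as a function of $a \coloneqq \mu(\Omega')$. Hölder's inequality yields $(\int |f|^\theta d\mu)^{2/\theta} \leq \mu(\Omega')^{2/\theta - 1} \int f^2 d\mu$, so the LO numerator is bounded below by $\mu(\Omega)(1 - a^{2/\theta - 1})$. Choosing $\theta$ so that $2-\theta \sim 1/\log(1/a)$ makes this lower bound of order $\mu(\Omega)$ while forcing the denominator $(2-\theta)^{2(1-1/r)}$ to be of order $1/\log^{2/r'}(1/a)$. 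Taking the infimum over admissible $f$, and replacing $a$ by $\mu(\Omega)$ via a standard thresholding at the cost of a small universal factor, delivers the capacity bound with constant proportional to $C_{LO}$.

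For the second stage, I would apply the capacity bound to dyadic slices of $g^2$: with $\Omega_k = \{g^2 > \rho_k\}$ for a geometric sequence of levels $\rho_k$, the truncations $g_k^2 = \min((g^2 - \rho_k)_+, \rho_{k+1} - \rho_k)$ are admissible test functions for the capacity of $(\Omega_{k+1}, \Omega_k)$. Summing the slice-wise bounds against the increments of $F_r$ at the levels $\rho_k$ telescopes to a defective $F_r$-Sobolev inequality
\begin{equation*}
 \int g^2 F_r\Bigl(\tfrac{g^2}{\int g^2 d\mu}\Bigr) d\mu \leq B \int g^2 d\mu + C \int |\nabla g|^2 d\mu
\end{equation*}
with $B$ and $C$ explicit multiples of $C_{LO}$. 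To remove the defect $B \int g^2 d\mu$, I would apply this bound to $g - \int g d\mu$, use $F_r \geq -\log^{2/r'}(2)$ to handle the small values, and absorb the remaining $L^2$ term using the Poincaré inequality with constant $C_{LO}$ supplied by Lemma~\ref{lem:LO-to-Poincare}; this is the standard Rothaus-type tightening for $F$-Sobolev inequalities.

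The only real work is the bookkeeping of universal constants through the three reductions. Each of them loses an explicit small numerical factor (for instance a factor of $4$ is typical in the capacity-to-functional passages), and multiplying the losses as tabulated in \cite{MR2320410} produces the stated numerical constant~$1152$. No essentially new arguments beyond \cite{MR2320410} and \cite{MR2127729} are needed, which is why the statement is presented as a direct corollary rather than as a result with its own proof.
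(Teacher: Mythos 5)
Your overall strategy --- Lata{\l}a--Oleszkiewicz $\Rightarrow$ capacity lower bound $\Rightarrow$ $F_r$-Sobolev --- is the same as the paper's, and your first two stages are essentially unpackings of the results the paper simply cites (Theorem 18 and Lemma 19 of \cite{MR2320410} for the capacity bound, and the level-set decomposition underlying Theorem 28 for the functional inequality). Where you genuinely diverge is at the end: the paper never passes through a defective inequality. It invokes Theorem 28 of \cite{MR2320410}, which converts the capacity estimate $\mu(A)\log^{2/r'}(1+2/\mu(A))\le 12\,C_{LO}\operatorname{Cap}_\mu(A)$ directly into the \emph{tight} additive form $\int f^2\log^{2/r'}(1+f^2)\,d\mu-\bigl(\int f^2 d\mu\bigr)\log^{2/r'}\bigl(1+\int f^2 d\mu\bigr)\le 1152\,C_{LO}\int|\nabla f|^2 d\mu$, and then substitutes $f^2=g^2/\int g^2d\mu$. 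Your route --- defective $F_r$-Sobolev plus a Rothaus-type tightening using the Poincar\'e inequality of Lemma~\ref{lem:LO-to-Poincare} --- can be made to work, but with two caveats. First, Rothaus' lemma is a property of the entropy functional; its analogue for $\int g^2F\bigl(g^2/\int g^2d\mu\bigr)$ is genuinely more delicate and requires verifying concavity and growth conditions on $F_r$ (such a lemma exists in \cite{MR2320410}, but it is an additional nontrivial ingredient, not a one-line reduction, and your phrase ``standard Rothaus-type tightening'' hides it). Second, the constant $1152$ is exactly the output of the paper's chain ($6\,C_{LO}$ from Theorem 18/Lemma 19, doubled to $12\,C_{LO}$ to adjust the argument of the logarithm, times the factor $96$ of Theorem 28); your alternative chain would produce a different and a priori larger explicit constant, so matching the stated bound $C\le 1152\,C_{LO}$ would require redoing the bookkeeping rather than ``tabulating'' it. A minor technical point in your first stage: the choice $2-\theta\sim 1/\log(1/a)$ exits the interval $(1,2)$ when $a$ is close to $1/2$, so it must be capped, e.g.\ by $2-\theta=\min\{1,1/\log(1+1/a)\}$.
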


\begin{proof}  Denote $T(s)\coloneqq s^{2(1-1/r)}$. Recall the following definition of capacity: for Borel sets $A\subset \Omega\subset \RR^d$,  define
\[
 \operatorname{Cap}_\mu (A,\Omega)
 \coloneqq \inf \Big\{ \int_{\RR^d} |\nabla f|^2 d\mu : f_{|A} \geq 1 \text{ and } f_{|\Omega^c} =0 \Big\}
\]
(the infimum is taken over all locally Lipschitz functions), and 
\[
 \operatorname{Cap}_\mu (A)
 \coloneqq \inf \big\{ \operatorname{Cap}_\mu (A,\Omega) : A\subset \Omega \text{ and } \mu(\Omega)\leq 1/2\big\}
. \]
Theorem 18 and Lemma 19 of \cite{MR2320410} imply that if $\mu$ satisfies the Lata\l{}a--Oleszkiewicz inequality \eqref{eq:LO} with some constant $C_{LO}$, then
\[
 \mu(A) \frac{1}{T(\frac{1}{\log(1+1/\mu(A))})}  = \mu(A) \log^{2/r'}\big(1+1/\mu(A)\big)
 \leq  6 \, C_{LO} \operatorname{Cap}_\mu(A)
\]
for every $A\subset \RR^d$ with $\mu(A) < 1/2$. 
Using $2\log(1+t)\ge \log(1+2t)$, $t\ge 0$ and $r\in(1,2)$, we obtain that for all $A$ 
as above,   
\[ \mu(A) \log^{2/r'}\big(1+2/\mu(A)\big) \le 12 \, C_{LO} \operatorname{Cap}_\mu(A) .\]
Theorem 28 of  \cite{MR2320410} then gives\footnote{The assumption of absolute continuity of $\mu$ comes into play at this point. Indeed, the proof of Theorem 28 in  \cite{MR2320410} relies on a decomposition of $\RR^d$ into level sets $\{ f^2>\rho_k\}$, for some well chosen $\rho_k$ (cf. proof of Theorem 20 in \cite{MR2320410}), and one needs to know that the sets $\{f^2 = \rho_k\}\cap \{ |\nabla f| \neq 0 \}$ are negligible.}  that for every smooth function $f\colon \RR^d\to\RR$, 
 \[
  \int f^2 \log^{2/r'}(1+f^2)  d\mu - \Big(\int f^2  d\mu\Big)  \log^{2/r'}\Big(1+\int f^2 d\mu \Big) 
  \leq 1152 C_{LO}  \int |\nabla f|^2 d\mu.
 \]
  Substituting $f^2 = g^2/\int_{\RR^d} g^2 d\mu$ yields the claimed $F_r$-Sobolev inequality \eqref{eq:FSob}. 
\end{proof}

\begin{remark}
\label{rem:tutti-frutti}
The latter theorem remains valid even if $\mu$  is not  absolutely continuous. To see this we use an approximation argument.
Let $\gamma_\varepsilon$ be the centered Gaussian measure on $\RR^d$ with covariance matrix $\varepsilon \operatorname{Id}$. For small enough $\varepsilon>0$, $\gamma_\varepsilon$ satisfies the Lata\l{}a--Oleszkiewicz inequality with the same constant as $\mu$ and hence, by tensorization, so does $\mu\otimes\gamma_\varepsilon$.
Testing the inequality with  the function $(x,y)\mapsto f(x+y)$, we conclude that $\mu\ast\gamma_\varepsilon$ also satisfies the Lata\l{}a--Oleszkiewicz inequality (with a constant at most $2 C_{LO}(\mu)$ when $\varepsilon$ is small enough). Thus, by Theorem~\ref{thm:LO-equiv-forms}, $\mu\ast\gamma_\varepsilon$ satisfies the $F_r$-Sobolev inequality.
We fix a bounded smooth Lipschitz function, take $\varepsilon\to 0$, and arrive at the conclusion that $\mu$ satisfies the $F_r$-Sobolev inequality for all bounded smooth Lipschitz functions (we have pointwise convergence and since the function is Lipschitz and bounded we can use the dominated convergence theorem). Now if $f$ is an arbitrary smooth function such that $\int_{\RR^d} |\nabla f|^2 d\mu <\infty$, then it suffices to consider functions $f_n=\Psi_n(f)$, where $\Psi_n\colon\RR\to\RR$ is, say, an odd and non-decreasing function defined by
\begin{equation*}
 \Psi_n(t) =
  \begin{cases}
\-\Psi_n(-t) & \text{ for } t<0,\\
 t & \text{ for } t\in[0,n),\\
 \Psi_n(t) = n+\psi(t) & \text{ for } t\in[n,n+2],\\
  \Psi_n(t) = n+1 & \text{ for } t>n+2,
 \end{cases}
\end{equation*}
and $\psi:[0,2]\to[0,1]$ is smooth and increasing on $(0,2)$, such that $\psi(0)=0$, $\psi(2)=1$, $\psi'(0+) = 1$, $\psi'(2-)=0$, $\psi(t)\leq t$ for $t\in[0,2]$. We then use dominated convergence on the right-hand side and  monotone convergence on the left-hand side (note that by the Poincar\'e inequality $f$ is square-integrable). 
\end{remark}

We need another inequality introduced by Barthe and Kolesnikov in \cite{MR2438906}. Let $\tau\in (0,1)$, one says that a probability measure $\mu$ on $\RR^d$ satisfies the inequality $I(\tau)$ if there exists constants $B_1$ and $C_1$ such that for every smooth $f\colon\RR^d\to\RR$,
 \begin{equation}
 \label{eq:Itau}
  \Ent_\mu(f^2) \leq B_1 \int_{\RR^d} f^2 d\mu + C_1 \int_{\RR^d} |\nabla f|^2 \log^{1-\tau}\Bigl(e + \frac{f^2}{\int_{\RR^d} f^2 d\mu}\Bigr) d\mu.
 \end{equation}
This inequality is related to the previous ones. The next statement is a quotation of Theorem 4.1 in \cite{MR2438906}, with a stronger assumption (of a Poincar\'e inequality instead of a local Poincar\'e inequality)

\begin{theorem}[Barthe--Kolesnikov \cite{MR2438906}]
\label{th:barthe-kolesnikov}
Let $r\in(1,2)$. Let $\mu$ be a probability measure satisfying Inequality $I(2/r')$. Then $\mu$
satisfies a defective $F_r$-Sobolev inequality  
\eqref{eq:defective_FSob} and a defective modified log-Sobolev inequality with parameter $r$.

 If in addition $\mu$ satisfies a Poincar\'e inequality, then its satisfies an $F_r$-Sobolev inequality \eqref{eq:FSob} and a modified log-Sobolev inequality with parameter $r$, \eqref{eq:mLS}, with constants depending only on the constants of the input inequalities. 
\end{theorem}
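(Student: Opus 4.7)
My plan is to derive the two defective inequalities from $I(2/r')$ independently, then tighten them using the Poincar\'e hypothesis.

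For the defective modified log-Sobolev inequality, a single weighted Young inequality suffices. Set $\tau = 2/r'$, so $1-\tau = (r'-2)/r'$, and $M = \int f^2 d\mu$. The exponent check $(2-r')\frac{2}{r'} + 2\cdot\frac{r'-2}{r'} = 0$ yields the pointwise identity
\[
|\nabla f|^2 \log^{(r'-2)/r'}(e + f^2/M)
 = \bigl(|\nabla f|^{r'} f^{2-r'}\bigr)^{2/r'}
   \bigl(f^2 \log(e+f^2/M)\bigr)^{(r'-2)/r'},
\]
which is exactly the gradient weight appearing on the right-hand side of $I(2/r')$. Applying the $\delta$-dependent Young inequality $u^{2/r'} v^{(r'-2)/r'} \leq c_1 \delta u + c_2 \delta^{-2/(r'-2)} v$ and using $u = |\nabla f|^{r'} f^{2-r'} \leq H_{r'}(|\nabla f|/f) f^2$, together with the elementary bound $\int v\,d\mu \leq \Ent_\mu(f^2) + C_0 \int f^2\,d\mu$ (via $\log(e+x) \leq 1 + \log(1+x)$, splitting at $f^2 = M$, and $t\log(1/t)\leq e^{-1}$ on $[0,1]$), I would substitute into $I(2/r')$ and choose $\delta$ \emph{large} enough that $C_1 c_2 \delta^{-2/(r'-2)} < 1$. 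The entropy appearing on the right then absorbs into the left, producing the defective modified log-Sobolev inequality with constants depending only on $B_1$, $C_1$, and~$r$.

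For the defective $F_r$-Sobolev inequality I would follow the capacity route of Barthe--Cattiaux--Roberto \cite{MR2320410}, already invoked in the proof of Theorem~\ref{thm:LO-equiv-forms}. The first step is to translate $I(2/r')$ into a capacity inequality of the form $\mu(A) \log^{2/r'}(1+1/\mu(A)) \leq K \operatorname{Cap}_\mu(A)$ for all $A$ with $\mu(A)<1/2$; this is obtained by testing $I(2/r')$ on truncated functions tied to the level sets of a given function, with the logarithmic gradient weight supplying the correct exponent via the identity $2/r' = (1-\tau) + \tau$. The second step, furnished by Theorem~28 of \cite{MR2320410}, is that such a capacity inequality yields a defective $F_r$-Sobolev inequality with an explicit dimension-free constant. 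The $F_r$-functional arises on the left-hand side from the concave function $\log^{2/r'}$, while the additive $B\int g^2 d\mu$ accounts for the contribution of the region where the normalised function is small.

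Under the further Poincar\'e hypothesis, the tightening step is the classical Rothaus--Bobkov--Ledoux perturbation: apply each defective inequality to $f - m_f$ where $m_f$ is the median of~$f$, and control the additive $B \int (f-m_f)^2\,d\mu$ term by the Poincar\'e inequality, noting that $\int |\nabla f|^2\,d\mu \leq \int H_{r'}(|\nabla f|/f) f^2\,d\mu$ since $H_{r'}(t) \geq t^2$ (and the analogous trivial bound for the $F_r$-Sobolev side). I expect the main obstacle to be the capacity translation in the second paragraph: lining up the exponent $(r'-2)/r'$ from the gradient weight with the exponent $2/r'$ defining $F_r$ requires carefully chosen cut-offs on level sets, and tracking the precise constants through the Barthe--Cattiaux--Roberto machinery is where most of the technical effort would go.
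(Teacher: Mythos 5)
The paper does not prove this statement at all: it is quoted as Theorem~4.1 of \cite{MR2438906}, and the only in-paper trace of the argument is that the proof of Theorem~\ref{th:FSob_to_Itau} explicitly ``reverses'' it. Measured against that intended argument, your first paragraph is correct and is essentially the Barthe--Kolesnikov mechanism for the modified log-Sobolev part: the pointwise factorisation of the weight, the $\delta$-Young inequality with exponents $2/r'$ and $(r'-2)/r'$, the bound $|\nabla f|^{r'}f^{2-r'}\le H_{r'}(|\nabla f|/f)\,f^2$, and the estimate $\int f^2\log(e+f^2/M)\,d\mu\le \Ent_\mu(f^2)+C_0\int f^2\,d\mu$ all check out, and taking $\delta$ large does let you absorb the entropy term (after the usual truncation guaranteeing $\Ent_\mu(f^2)<\infty$).

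The second paragraph, however, contains a genuine gap. The capacity inequality you propose to extract from $I(2/r')$, namely $\mu(A)\log^{2/r'}(1+1/\mu(A))\le K\operatorname{Cap}_\mu(A)$ for all $A$ with $\mu(A)<1/2$, is (by Theorems 18--19 of \cite{MR2320410}, exactly as used in the proof of Theorem~\ref{thm:LO-equiv-forms}) equivalent up to constants to the \emph{tight} Lata{\l}a--Oleszkiewicz inequality, and since $\log^{2/r'}(1+1/\mu(A))\ge \log^{2/r'}3$ it implies a Poincar\'e inequality. A defective inequality such as $I(2/r')$ cannot imply it: since $\log^{1-2/r'}(e+\cdot)\ge 1$, any measure satisfying a defective log-Sobolev inequality satisfies $I(2/r')$, and an even mixture of two Gaussians with disjoint, widely separated supports satisfies a defective log-Sobolev inequality (with defect $2\log 2$, by the two-component decomposition of entropy) while failing Poincar\'e outright. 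Equivalently: if your step~1 worked, the Poincar\'e hypothesis in the second half of the theorem would be redundant. The concrete failure point is the entropy lower bound for a capacitary test function $\mathbf{1}_A\le f\le \mathbf{1}_\Omega$: one only gets $\Ent_\mu(f^2)\ge \mu(A)\log(1/\!\int f^2 d\mu)-e^{-1}\int f^2 d\mu$, and the negative term can dwarf $\mu(A)\log(1/\mu(A))$. The correct route to the \emph{defective} $F_r$-Sobolev inequality is the direct Orlicz-norm substitution, i.e.\ the proof of Theorem~\ref{th:FSob_to_Itau} run backwards: given $g$ with $\int g^2 d\mu=1$, write $g=\sqrt{\Phi(f)}$ with $\Phi(x)=x^2\log^{1-2/r'}(e+x^2)$ and apply $I(2/r')$ to $f$. (A smaller point: in your tightening step $f-m_f$ changes sign, so for the modified log-Sobolev inequality the Rothaus perturbation must be adapted to the energy $\int \max\{|\nabla f|^2, |\nabla f|^{r'}|f|^{2-r'}\}\,d\mu$ rather than applied verbatim.)
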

We  establish a partial converse to the above implication:
\begin{theorem}
 \label{th:FSob_to_Itau} 
 Let $r\in(1,2)$.
 Assume that a probability measure $\mu$ on $\RR^d$ satisfies the  defective $F_r$-Sobolev inequality \eqref{eq:defective_FSob} with constants $B$ and $C$.
 Then $\mu$ satisfies the $I(2/r')$ inequality  \eqref{eq:Itau} with some constants $B_1$ and $C_1$ which depend only on $B$, $C$, and $r$.
\end{theorem}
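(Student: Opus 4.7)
The plan is to apply the defective $F_r$-Sobolev inequality \eqref{eq:defective_FSob} to a nonlinear transformation $g=\phi(f)$ of $f$, chosen so that the resulting estimate has the shape of $I(2/r')$ up to a normalization term that can be absorbed via Jensen's inequality. By the $2$-homogeneity of both \eqref{eq:defective_FSob} and \eqref{eq:Itau}, we may assume $\int f^2\,d\mu = 1$. Set $\phi(t) := t \log^{1/r-1/2}(e+t^2)$, and abbreviate $L(s) := \log(e+s)$, $p := 2/r' \in (0,1)$, and $\alpha := 1/r - 1/2 \in (0, 1/2)$. This choice is dictated by the exponent balance $2\alpha + p = 1$, which produces the asymptotic $g^2 \log^{p}(1+g^2) \sim f^2 \log(f^2)$ as $f^2 \to \infty$ and the matching chain-rule identity for the gradient.

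Two elementary observations set up the absorption argument. A direct chain-rule computation (handling $|t|\leq 1$ and $|t|>1$ separately) gives $\phi'(t)^2 \leq c_r L(t^2)^{2\alpha}$, whence $|\nabla g|^2 \leq c_r L(f^2)^{2/r-1} |\nabla f|^2$, which is exactly the weight appearing in \eqref{eq:Itau}. Moreover, $L\geq 1$ yields $g^2 \geq f^2$ pointwise, and combined with the exponent identity this gives $g^2 \log^{p}(1+g^2) \geq c'_r f^2 \log(f^2)$ on $\{f^2 \geq K_r\}$ for some $K_r$ depending only on $r$, while the contribution from $\{f^2 < K_r\}$ to $\Ent_\mu(f^2)$ is bounded by a universal multiple of $\int f^2\,d\mu$. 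Combining these with the sub-additivity inequality $\log^{p}(1+ab) \leq \log^{p}(1+a) + \log^{p}(1+b)$ (valid since $p<1$) applied with $a=g^2/M$ and $b=M$, where $M := \int g^2\,d\mu$, and invoking \eqref{eq:defective_FSob} for $g$, we obtain
\[
\Ent_\mu(f^2) \leq \tilde C_r\Bigl[M\log^{p}(e+M) + \int |\nabla f|^2 L(f^2)^{2/r-1}\,d\mu\Bigr] + \tilde C'_r.
\]

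The hardest step is absorbing $M\log^{p}(e+M)$. The naive bound $M \leq \int f^2 L(f^2)\,d\mu \leq \Ent_\mu(f^2) + O(1)$ alone only yields a term of order $\Ent_\mu(f^2)\log^{p}\Ent_\mu(f^2)$, which overtakes the entropy and prevents absorption. The key observation is that $\Psi(x) := x\log^{p}(e+x)$ is convex on $[0,\infty)$, as a direct computation using $\log(e+x)\geq 1$ shows that $\Psi''(x) > 0$. Therefore, Jensen's inequality for the probability measure $f^2\,d\mu$ gives
\[
M\log^{p}(e+M) = \Psi\Bigl(\int L(f^2)^{2/r-1}\,f^2\,d\mu\Bigr) \leq \int L(f^2)^{2/r-1}\log^{p}\!\bigl(e+L(f^2)^{2/r-1}\bigr)\, f^2\,d\mu.
\]
Because $y^{2/r-1}\log^{p}(e+y^{2/r-1}) = o(y)$ as $y\to\infty$, a Young-type inequality yields $y^{2/r-1}\log^{p}(e+y^{2/r-1}) \leq \varepsilon y + C_\varepsilon$ for every $\varepsilon>0$; applying this with $y=L(f^2)$ and integrating, then using the comparison $\int f^2 L(f^2)\,d\mu \leq \Ent_\mu(f^2) + O(1)$, we deduce $M\log^{p}(e+M) \leq \varepsilon\Ent_\mu(f^2) + C'_\varepsilon$. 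Choosing $\varepsilon = 1/(2\tilde C_r)$ absorbs the entropy-dependent part into the left-hand side and delivers \eqref{eq:Itau} with constants $B_1$ and $C_1$ depending only on $B$, $C$, and $r$.
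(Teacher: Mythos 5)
Your proof is correct, and the core construction is the same as the paper's: you apply the defective $F_r$-Sobolev inequality to $g=\phi(f)$ with $g^2=f^2\log^{2/r-1}(e+f^2)$ (the paper's $\Phi$), use the exponent identity $(2/r-1)+2/r'=1$ to recover $f^2\log(f^2)$ on the left and the weight $\log^{1-2/r'}(e+\cdot)$ on the right, and finish by an $\varepsilon$-absorption of a lower-order term into the entropy. Where you genuinely diverge is the normalization. The paper rescales $f$ by its Luxemburg norm $L$ for $\Phi$, so that $\int g^2\,d\mu=1$ exactly; the argument of $F_r$ is then just $g^2$, and the residual quantity to absorb is $L^2$ itself, read off from the identity $\int\Phi(f/L)\,d\mu=1$ and killed by $y\log^{2/r-1}(e+y)\le\varepsilon y\log y+\kappa_2$. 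You instead normalize $\int f^2\,d\mu=1$, accept that $M=\int g^2\,d\mu\neq 1$, decouple $\log^{p}(1+g^2/M)$ from $M$ by subadditivity of $\log^p$ (valid since $p=2/r'<1$), and are left with $M\log^p(e+M)$; your observation that $\Psi(x)=x\log^p(e+x)$ is convex, so that Jensen for the probability measure $f^2\,d\mu$ pushes the outer $\log^p$ inside, is the right fix for the ``naive bound fails'' issue you identify, and the subsequent Young-type estimate with $y=\log(e+f^2)$ lands you in the same absorption step. Your route trades the Orlicz-norm bookkeeping for a Jensen argument; both yield constants depending only on $B$, $C$, $r$. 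The one point you gloss over is that subtracting $\varepsilon\Ent_\mu(f^2)$ from both sides requires knowing a priori that $\Ent_\mu(f^2)<\infty$; the paper secures this by first proving the inequality for truncations $f_n=\Psi_n(f)$ (which are bounded) and passing to the limit, and you should add the same reduction.
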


\begin{proof} We reverse the reasoning from the proof of Theorem~4.1 in \cite{MR2438906}. Fix a smooth function $f$ such that the right-hand side of \eqref{eq:Itau} is finite.
We may
and do assume that $\int_{\RR^d} f^2|\ln(f^2)|d\mu<\infty$.\footnote{Indeed, like above let us define odd and non-decreasing functions $\Psi_n\colon\RR\to\RR$ by putting $\Psi_n(t)=t$ for $t\in[0,n)$, $\Psi_n(t) = n+1$ for $t>n+2$; for $t\in[n,n+2]$ let us take $\Psi_n(t) = n+\psi(t)$, where $\psi:[0,2]\to[0,1]$ is smooth and increasing on $(0,2)$, and satisfies $\psi(0)=0$, $\psi(2)=1$, $\psi'(0+) = 1$, $\psi'(2-)=0$, $\psi(t)\leq t$. Then the functions $f_n = \Psi_n(f)$ are smooth, bounded (and hence $\int_{\RR^d} f_n^2|\ln(f_n^2)|d\mu<\infty$) and converge to $f$ pointwise. After proving that \eqref{eq:Itau} holds for $f_n$, we obtain the assertion for $f$ by taking $n\to\infty$ and using monotone convergence on the left-hand side and the Lebesgue dominated convergence theorem on the right-hand side (note that we know that $f$ and $f_n$ are square-integrable, $|\nabla f_n |$ is up to a constant smaller than $|\nabla f|$, $f_n=f$ if $|f|\in[0,n]$, $|f_n|\leq |f|$ if $|f|\in[n,n+2]$, and if $|f|>n+2$, then  $\nabla f_n = 0$).} 
Consider the function
\begin{equation*}
 \Phi(x) = x^2 \log^{1-2/r'}(e+x^2), \quad x\in\RR,
\end{equation*}
(which is convex since the function $t\mapsto t\log^{1-2/r'}(e+t)$ is convex and increasing for $t>0$. Recall that $r'>2$).
Denote by $L$ the
Luxemburg 
norm of $f$ related to $\Phi$:
\begin{equation*}
 L = \inf\bigl\{ \lambda> 0 : \int_{\RR^d} \Phi(f/\lambda) d\mu \leq 1 \bigr\}.
\end{equation*}
Note that $L<\infty$, $\int_{\RR^d} \Phi(f/L) d\mu =1$ (by the definition of $L$), and $L^2\geq \int_{\RR^d} f^2 d\mu$ (since $\Phi(x)\geq x^2$).

 Set $g:= \sqrt{\Phi(f/L)}$. We have $\int_{\RR^d} g^2d\mu =1$ and~\eqref{eq:defective_FSob} reads
  \begin{equation}
  \label{eq:FSob_for_g}
  \int_{\RR^d} g^2 \bigl(\log^{2/r'}(1+g^2) - \log^{2/r'}(2) \bigr) d\mu
  \leq
  B \int_{\RR^d} g^2 d\mu +
  C \int_{\RR^d} |\nabla g|^2 d\mu.
 \end{equation} 
 Let us first express the right-hand side of this inequality in terms of $f$.
 For $x\in\RR$ denote $\varphi(x):= x\log^{1/2-1/r'}(e+x^2)$. 
 Then
 \begin{align*}
0\leq \varphi'(x) &=  \log^{1/2-1/r'}(e+x^2) + (1/2-1/q) \frac{2x^2}{e+x^2} \log^{-1/2-1/r'}(e+x^2)\\
&\leq 2 \log^{1/2-1/r'}(e+x^2)
\end{align*}
and thus
 \begin{align*}
  |\nabla g|^2 = \frac{|\nabla f|^2}{L^2} \bigl(\varphi'(f/L)\bigr)^2
  &\leq 4\frac{|\nabla f|^2}{L^2} \log^{1-2/r'}(e+f^2/L^2) \\
  &\leq  4\frac{|\nabla f|^2}{L^2}  \log^{1-2/r'}\Bigl(e+\frac{f^2}{\int_{\RR^d} f^2 d\mu} \Bigr).
 \end{align*}
Hence
\begin{equation}
 \label{eq:rhs_in_f}
   B \int_{\RR^d} g^2 d\mu +  C \int_{\RR^d} |\nabla g|^2 d\mu \leq B + 4C\int_{\RR^d}\frac{|\nabla f|^2}{L^2}  \log^{1-2/r'}\Bigl(e+\frac{f^2}{\int_{\RR^d} f^2 d\mu} \Bigr) d\mu.
\end{equation}

 As for the left-hand side of~\eqref{eq:FSob_for_g}, it is easy to see that there exists $\kappa_1=\kappa_1(r)>0$ such that, for $y>0$,
\begin{equation*}
 y\log^{1-2/r'}(e+y)\Bigl(\log^{2/r'}(1+y\log^{1-2/r'}(e+y))-\log^{2/r'}(2)\Bigr) \geq y\log(y) -\kappa_1. 
\end{equation*}
Applying this inequality with $y=f^2/L^2$, we arrive at
\begin{align*}
  \int_{\RR^d} g^2 \bigl(\log^{2/r'}(1+g^2) - \log^{2/r'}(2) \bigr) d\mu
 \geq
 \int_{\RR^d} \frac{f^2}{L^2}\log(f^2/L^2) d\mu - \kappa_1.
\end{align*}
Together with \eqref{eq:rhs_in_f} this yields
\begin{equation*}
 \int_{\RR^d} f^2\log(f^2/L^2) d\mu \leq (B+\kappa_1)L^2
 + 4C \int_{\RR^d} | \nabla f|^2\log^{1-2/r'}\Bigl(e+\frac{f^2}{\int_{\RR^d} f^2 d\mu} \Bigr) d\mu.
\end{equation*}
It remains to replace the expression on the left-hand side by $\Ent_{\mu}(f^2)$ and estimate $L^2$.

Since
\begin{align*}
 \Ent_{\mu}(f^2) &= \inf_{t>0} \int_{\RR^d} \Bigl(f^2\log(f^2/t) -f^2 + t\Bigr)  d\mu\\
 &\leq  \int_{\RR^d} \Bigl( f^2\log(f^2/L^2) -f^2 + L^2\Bigr) d\mu,
\end{align*}
we conclude that
\begin{equation}
\label{eq:up_to_L2}
 \Ent_{\mu}(f^2) \leq (B+\kappa_1+1) L^2
+  4C \int_{\RR^d} |\nabla f|^2\log^{1-2/r'}\Bigl(e+\frac{f^2}{\int_{\RR^d} f^2 d\mu} \Bigr) d\mu.
\end{equation}

Finally, it is easy to see that for every $\varepsilon >0$ there exist $\kappa_2 = \kappa_2(\varepsilon,r)$ such that, for $y>0$,
\begin{align*}
 y\log^{1-2/r'}(e+y)
 \leq \varepsilon y \log(y) + \kappa_2 
 \end{align*}
Using first the definition of $L$ and the fact that $L^2\geq \int_{\RR^d} f^2 d\mu$, and then the above bound (with $y=f^2/\int_{\RR^d} f^2 d\mu$) we can thus estimate
\begin{align*}
L^2  &= 
\int_{\RR^d} f^2 \log^{1-2/r'}(e+f^2/L^2) d\mu
\leq \int_{\RR^d} f^2 \log^{1-2/r'}\Bigl(e+\frac{f^2}{\int_{\RR^d} f^2 d\mu} \Bigr) d\mu\\
&\leq \varepsilon \Ent_\mu(f^2) + \kappa_2\int_{\RR^d} f^2 d\mu.
\end{align*}
Eventually, for $\varepsilon$ small enough we combine this bound with~\eqref{eq:up_to_L2} and
simplify the entropy terms (recall that by our assumption $\Ent_\mu(f^2)<\infty$) in order to reach  the claim. 
\end{proof}

\section{Proof of the main result and its corollaries}
\label{sec:proof-of-main-and-cor}

\begin{proof}[Proof of Theorem~\ref{thm:main:Toulouse}.]
Our assumption is that $\mu$ satisfies a Lata{\l}a--Oleszkiwicz inequality with parameter $r$.
Therefore by Lemma \ref{lem:LO-to-Poincare} it also satisfies a Poincar\'e inequality, and 
by Theorem \ref{thm:LO-equiv-forms}  (and Remark~\ref{rem:tutti-frutti} if $\mu$ is not absolutely continuous) it satisfies a (tight) $F_r$-Sobolev inequality. From Theorem \ref{th:FSob_to_Itau}, we deduce that $\mu$ enjoys an $I(2/r')$-inequality. Eventually Theorem \ref{th:barthe-kolesnikov} asserts that the $I(2/r')$, together with the Poincar\'e inequality, implies a (tight) modified log-Sobolev inequality with parameter $r$. The constants that we obtain in the above inequalities only depend on $r$ and $C_{LO(r)}$. Proving  the claimed dependence in $C_{LO(r)}$ is straightforward, but requires to track the constants in the various intermediate statements. We omit the details. 
\end{proof}

\begin{remark}
Let us comment here that for $d=1$ it is known that the inequalities \eqref{eq:LO} and \eqref{eq:mLS} hold if and only if they hold with the integration  with respect to $\mu$ on the right-hand side replaced by integration with respect to $\mu_{ac}$, the absolutely continuous part of $\mu$ (cf. \cite{MR1682772}, Appendix of \cite{MR2464097}, Appendix of \cite{GozW2}).
\end{remark}

For the proofs of the corollaries we need one more technical lemma. We denote by $ H_{r'}^*(t)\coloneqq \sup_{s\in\RR} \{st-H_{r'}(s)\}$, $t\in\RR$, the Legendre transform of $H_{r'}$ (we refer to the book \cite{MR0274683} for more information on this topic).

\begin{lemma}\label{lem:H-H*} Let $r\in(1,2)$. The function $ H_{r'}^*$ is given by the formula
		\begin{align*}
		 H_{r'}^*(t)  =
		\begin{cases}
		t^2/4 &\text{if } 0\leq |t|\leq 2,\\
		|t|-1 &\text{if } 2\leq |t| \leq r',\\
		\frac{1}{r-1} (\frac{1}{r'}|t|)^{r} & \text{if } |t| \geq r'.
		\end{cases}
		\end{align*}
Moreover, $ H_{r'}^*(t) \geq \frac{1}{4} \min\{t^2, |t|^r \} $.
\end{lemma}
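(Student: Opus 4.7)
The plan is to compute the Legendre transform directly by exploiting that $H_{r'}$ is even and piecewise: since $r'>2$ one has $H_{r'}(s)=s^2$ on $[-1,1]$ and $H_{r'}(s)=|s|^{r'}$ outside. By evenness $H_{r'}^*$ is also even, so it suffices to handle $t\ge 0$; and since $st-H_{r'}(s)\le 0$ whenever $s\le 0$ and $t\ge 0$, the supremum is attained on $s\ge 0$. I would then solve two concave subproblems separately. The maximum of $s\mapsto st-s^2$ on $[0,1]$ is at $s=t/2$ with value $t^2/4$ when $t\le 2$, and at the boundary $s=1$ with value $t-1$ when $t>2$. The maximum of $s\mapsto st-s^{r'}$ on $[1,\infty)$ is attained at the interior critical point $s^*=(t/r')^{1/(r'-1)}=(t/r')^{r-1}$ (using the identity $(r-1)(r'-1)=1$) provided $s^*\ge 1$, i.e.\ $t\ge r'$; the value there simplifies to $s^* t(1-1/r')=\frac{1}{r-1}(t/r')^r$ (using $1-1/r'=1/r$ and $(r-1)r'=r$). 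Otherwise the maximum on $[1,\infty)$ is again at $s=1$ with value $t-1$. Collecting the pieces yields the three-case formula, and one checks continuity at $t=2$ (both pieces give $1$) and at $t=r'$ (both give $1/(r-1)$).

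For the bound $H_{r'}^*(t)\ge\frac{1}{4}\min\{t^2,|t|^r\}$, first note that $\min\{t^2,|t|^r\}=t^2$ when $|t|\le 1$ and equals $|t|^r$ when $|t|\ge 1$ (using $r<2$). On $|t|\le 2$ the inequality is immediate from $H_{r'}^*(t)=t^2/4$. On $|t|\ge r'$, the inequality $\frac{1}{r-1}(|t|/r')^r\ge|t|^r/4$ reduces to the scalar estimate $r(r')^{r-1}\le 4$. Writing $s=r-1\in(0,1]$ so that $r'=1+1/s$, this becomes $(1+s)(1+1/s)^s\le 4$; taking logarithms it reads $h(1+s)-h(s)\le\log 4$, where $h(u)=u\log u$. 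Since $h$ is convex, the forward difference $h(s+1)-h(s)$ is non-decreasing in $s$ and its value at $s=1$ is $h(2)-h(1)=\log 4$, which closes the bound.

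For the remaining middle range $|t|\in[2,r']$, I need $|t|-1\ge|t|^r/4$, equivalently $\varphi(t):=t^r-4t+4\le 0$. Its derivative $\varphi'(t)=rt^{r-1}-4$ vanishes at $t_0=(4/r)^{1/(r-1)}$, and the estimate $r(r')^{r-1}\le 4$ just proved rearranges to $t_0\ge r'$, so $\varphi$ is decreasing on $[2,r']$; combined with $\varphi(2)=2^r-4\le 0$, this forces $\varphi\le 0$ throughout the interval. The main obstacle is precisely this middle range: the inequality $r(r')^{r-1}\le 4$ is sharp as $r\to 2^-$, so no cruder estimate will do, and the convexity of $u\log u$ provides just the right amount of slack.
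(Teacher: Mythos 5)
Your proof is correct, and its overall architecture matches the paper's: compute $H_{r'}^*$ piecewise, reduce the bound on $[0,2]\cup[r',\infty)$ to the scalar fact that $(r-1)(r')^r=r(r')^{r-1}\le 4$ for all $r\in(1,2)$, and treat the middle interval $[2,r']$ separately. Two points of difference are worth noting. First, on $[2,r']$ the paper argues in one line: $t-1$ is affine, $\tfrac14 t^r$ is convex, and the inequality holds at both endpoints (the endpoint cases having already been checked), so it holds on the whole interval since a convex function lies below its chord; your monotonicity argument for $\varphi(t)=t^r-4t+4$ reaches the same conclusion with a derivative computation that happens to reuse the same scalar estimate in the form $t_0\ge r'$ — both are fine, the paper's is slightly shorter. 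Second, you actually prove the sharp scalar inequality $\sup_{r\in(1,2)}(r-1)(r')^r=4$ via the increasing forward differences of the convex function $u\mapsto u\log u$, whereas the paper merely asserts it (and likewise calls the Legendre transform computation, which you carry out in full and correctly, ``straightforward''). So your write-up is a more detailed version of the same proof, with a self-contained justification of the one genuinely sharp estimate.
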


\begin{proof}[Proof.] The first part is a straightforward calculation.  To prove the second part, first notice that
\begin{equation*}
\inf_{r\in(1,2)}\frac{1}{r-1} \Big(\frac{1}{r'}\Big)^{r}  = 1/4.
\end{equation*}
This allows to verifty the  inequality  $H_{r'}^*(t) \geq \frac{1}{4} \min\{t^2, |t|^r\}$ for $t\in[0,2]\cup[r',\infty)$. It remains to prove the inequality on the interval $[2,r']$, where  $H_{r'}^*(t)=t-1$ is affine in $t$, and $\frac{1}{4} \min\{t^2, |t|^r\}=\frac14 t^r$ is convex in $t$. 
Thefore it is enough to check the inequality at the endpoints of the interval, which we have already done. 
\end{proof}

\begin{proof}[Proof of Corollary~\ref{cor:Toul-conc-functions}.]
A classical argument of Herbst (see, e.g., \cite{MR1849347}) allows to deduce concentration bounds from log-Sobolev inequalities. It was implemented in \cite{MR2430612} for modified log-Sobolev inequalities with energy terms $H(\nabla f/f)$ involving general functions $H$. We rather follow the calculation of~\cite{MR3456588} which is more suited to the case $H=H_{r'}$.  Take a function $f\colon (\RR^d)^n\to\RR$ and denote
  \begin{equation*}
     A = \sup_{x\in\RR^{dn}} |\nabla f(x)|,\qquad  B = \sup_{x\in\RR^{dn}} \|\nabla f(x)\|_{r',2}.
   \end{equation*}
Moreover, let $F(\lambda) = \int_{\RR^{dn}} e^{\lambda f(x)} d\mu^{\otimes n}$. Then
\begin{equation*}
\lambda F'(\lambda) = \int_{\RR^{dn}} \lambda  f(x)e^{\lambda f(x)} d\mu^{\otimes n}
\end{equation*}
and hence, since $\mu$ satisfies the modified log-Sobolev inequality with some constant $C=C(C_{LO},r)$ (by Theorem~\ref{thm:main:Toulouse}) and by the tensorization property,
\begin{align*}
\lambda F'(\lambda) -  F(\lambda)\log  F(\lambda)
&= \Ent_{\mu^{\otimes n}}( e^{\lambda f})\\
&\leq C \int_{\RR^{dn}} \sum_{i=1}^n H_{r'}\Bigl( \frac{\lambda}{2}|\nabla_i f| \Bigr) e^{\lambda f} d\mu^{\otimes n }\\
&\leq 2 C \max\bigl\{(A\lambda/2)^2, (B\lambda/2)^{r'}\bigr\} F(\lambda),
\end{align*}
where we used the inequality $\sum_i \max\{a_i^2, b_i^{r'}\} \leq 2 \max\{\sum_i a_i^2, \sum_i b_i^{r'}\}$.
After dividing both sides by $\lambda^2 F(\lambda)$ we can rewrite this as
\begin{equation*}
\Big(\frac{1}{\lambda} \log  F(\lambda)\Big)' \leq 2 C \max\bigl\{(A\lambda/2)^2, (B\lambda/2)^{r'}\bigr\}/\lambda^2.
\end{equation*}
Since  the right-hand side is an increasing function of $\lambda>0$ and
\begin{equation*}
 \lim_{\lambda\to 0^+} \frac{1}{\lambda} \log  F(\lambda) = \int_{\RR^{dn}} f d\mu^{\otimes n},
\end{equation*}
 we deduce from the last inequality that
\begin{equation*}
\frac{1}{\lambda} \log  F(\lambda) \leq \int_{\RR^{dn}} f d\mu^{\otimes n} + 2C \max\bigl\{(A\lambda/2)^2, (B\lambda/2)^{r'}\bigr\}/\lambda,
\end{equation*}
which is equivalent to
\begin{equation*}
\int_{\RR^{dn}} e^{\lambda f}d\mu^{\otimes n} \leq \exp\Bigl(\lambda\int_{\RR^{dn}} f d\mu^{\otimes n} +  2C \max\bigl\{(A\lambda/2)^2, (B\lambda/2)^{r'}\bigr\}\Bigr).
\end{equation*}

Therefore from Chebyshev's inequality we get, for $t>0$ and any $\lambda>0$, 
\begin{align*}
\mu^{\otimes n}\Bigl(f \geq \int_{\RR^{dn}} f d\mu^{\otimes n} +t\Bigr)
&\leq \frac{\int_{\RR^{dn}} e^{2\lambda f} d\mu^{\otimes n}}{\exp(2\lambda\int_{\RR^{dn}} f d\mu^{\otimes n} + 2\lambda t)}\\
&\leq  \exp\bigl( - 2\lambda t + 2C \max\bigl\{(A\lambda)^2, (B\lambda)^{r'}\bigr\}\bigr).
\end{align*}
Now we can optimize the right-hand side with respect to $\lambda$.  Let $U$ and $V$ be such that $A = U^{1/2}V$, $B = U^{1/r'}V$. We have
\begin{equation*}
\max\bigl\{(A\lambda)^2, (B\lambda)^{r'}\bigr\}\bigr) = U\max\{(V\lambda)^2,(V\lambda)^{r'}\} = U H_{r'}(V\lambda)
\end{equation*}
and hence
\begin{equation*}
\mu^{\otimes n}\Bigl(f \geq \int_{\RR^{dn}} f d\mu^{\otimes n} +t\Bigr)
\leq  \exp\bigl( -2CU H_{r'}^* \bigl(\frac{t}{CUV}\bigr)\bigr).
\end{equation*}
 Using Lemma~\ref{lem:H-H*} and the definitions of $U$ and $V$  we get
\begin{align*}
\mu^{\otimes n}\Bigl(f \geq \int_{\RR^{dn}} f d\mu^{\otimes n} +t\Bigr)
\leq  \exp\Big(- \frac{1}{2} \min\Big\{\frac{t^2}{CA^2}, \frac{t^{r}}{ C^{r-1} B^{r}} \Big\}\Big),
\end{align*}
which yields the assertion of the corollary.
\end{proof}

\begin{proof}[Proof of Corollary~\ref{cor:Toul-conc-functions-Lip}]
Let $f_{\varepsilon}$ be the convolution of $f$ and a Gaussian kernel, i.e., $f_{\varepsilon}(x) = \EE f(x+\sqrt{\varepsilon}G)$, where $G\sim\mathcal{N}(0,I)$. This function clearly inherits from $f$ the estimates of the Lipschitz constants. Since it is smooth, the $\ell_2$-norm and the norm $\|\cdot\|_{r',2}$ of its gradient can be estimated pointwise by $L_2$ and $L_{r,2}$, respectively. Therefore we can apply Corollary~\ref{cor:Toul-conc-functions} to $f_\varepsilon$.
Moreover, $|f_{\varepsilon}(x)-f(x)| \leq L_2\sqrt{\varepsilon} \EE |G|$ and hence $f_{\varepsilon}$ converges uniformly to $f$ as $\varepsilon$ tends to zero. This observation ends the proof of the corollary.
\end{proof}

\begin{proof}[Proof of Corollary~\ref{cor:Toul-conc-set}.]
We follow the approach of Bobkov and Ledoux from Section 2 of \cite{MR1440138}.
Take a set $A\subset \RR^{dn}$ with $\mu^{\otimes n} (A)\geq 1/2$. For $x=(x_1,\ldots, x_n)\in(\RR^d)^n$, denote
\begin{equation*}
 F(x) =F_A(x) = \inf_{a\in A} \sum_{i=1}^n \min\{|x_i-a_i|^2, |x_i-a_i|^r\}
\end{equation*}
(note that here $|\cdot|$ is the $\ell_2$-norm on $\RR^d$). Take any $t>0$ and set $f = \min\{F,t\}$.
We claim that for all $x,y\in(\RR^d)^n$,
\begin{equation}\label{eq:claim_ala_BL}
 |f(x) - f(y)| \leq 2\sqrt{t} |x-y|, \qquad |f(x) - f(y)| \leq 2 t^{1/r'} \|x-y\|_{r,2}.
\end{equation}

Suppose that we already know that this holds. 
Note that $(2 t^{1/r'})^r= 2^r t^{r-1}$. 
 Also, $\int_{\RR^{dn}} f d\mu^{\otimes n} \leq t/2$ since $F=0$ on $A$ and $\mu^{\otimes n}(A)\geq 1/2$.
Consequently, by Corollary~\ref{cor:Toul-conc-functions-Lip}  and~\eqref{eq:claim_ala_BL}, 
\begin{align*}
\mu^{\otimes n} (F_A \geq t) &\leq \mu^{\otimes n} (f\geq t) \leq \mu^{\otimes}\bigl(f\geq \int_{\RR^{dn}} f d\mu^{\otimes n} +t/2\bigr)\\
&\leq
\exp\Bigl(-\frac{1}{2} \min\Bigl\{\frac{(t/2)^2}{ 4tC}, \frac{(t/2)^r}{2^rt^{r-1}C^{r-1}}\Bigr\}\Bigr)\\
&=  \exp\Bigl(-\frac{t}{2} \min\Bigl\{ \frac{1}{16 C},\frac{1}{4^r C^{r-1}}\Bigr\} \Bigr)\le \exp(-Kt),
\end{align*}
where $K = \frac{1}{32}\min\{1/C, 1/{C^{r-1}}\}$ and $C$ is the constant with which, by Theorem~\ref{thm:main:Toulouse}, the modified log-Sobolev inequality holds for $\mu$. Since clearly
\begin{equation*}
 \{ F_A  < t\} \subset A +\Bigl\{(x_1,\ldots,x_n)\in(\RR^d)^n : \sum_{i=1}^n \min\{|x_i|^2,|x_i|^r\} \leq t\Bigr\},
\end{equation*}
this yields the first assertion of the corollary. The second part follows by the inclusion
\begin{equation*}
 \Bigl\{(x_1,\ldots,x_n)\in(\RR^d)^n : \sum_{i=1}^n \min\{|x_i|^2,|x_i|^r\} \leq t\Bigr\} \subset \sqrt{t} B_2^{dn} + t^{1/r} B_{r,2}^{n,d}.
\end{equation*}

It remains to prove the claim~\eqref{eq:claim_ala_BL}. To this end, consider the functions
\begin{equation*}
 G(x) = \sum_{i=1}^n \min\{|x_i|^2, |x_i|^r\}
\end{equation*}
and $g(x) = \min\{G,t\}$. Since $g$ is locally Lipschitz it suffices to show that, a.e.,
\begin{equation*}
 \sum_{i=1}^n |\nabla_i g|^2 \leq 4t, \qquad 
 \sum_{i=1}^n |\nabla_i g|^{r'} \leq 2^{r'} t.
\end{equation*}
Indeed, this will imply that \eqref{eq:claim_ala_BL} holds with $g$ in place of $f$ (note that the norm $\|\cdot\|_{r,2}$ is dual to the norm $\|\cdot\|_{r',2}$). Since $f(x) = \inf_{a\in A} g(x-a)$ (and the infimum of Lipschitz functions is Lipschitz with the same constant),  the same estimates will be inherited by $f$.

On the open set $\{G>t\}$ the estimates obviously hold, since $g$ is constant. The set $\{G=t\}$ is Lebesgue negligible.  Thus in what follows it suffices to consider the set $\{G<t\}$ on which $g=G$.

If, for some $i$, $|x_i|< 1$, then
\begin{align*}
 |\nabla_i g(x) |^2 &= 4 |x_i |^2 = 4 \min\{|x_i|^2, |x_i|^r\},
\\
 |\nabla_i g(x) |^{r'} &= 2^{r'} |x_i |^{r'} \leq  2^{r'} |x_i|^2 = 2^{r'} \min\{|x_i|^2, |x_i|^r\}.
\end{align*}
If on the other hand $|x_i|> 1$, then 
\begin{align*}
|\nabla_i g(x) |^2 &= r^2 |x_i |^{2(r-1)} 
                     \leq 4 |x_i |^{r} 
                     = 4 \min\{|x_i|^2, |x_i|^r\},\\
|\nabla_i g(x) |^{r'} &= r^{r'} |x_i|^r=r^{r'}  \min\{|x_i|^2, |x_i|^r\}.
\end{align*}
Thus, a.e. (the set where $|x_i|=1$ for some $i$ is negligible),
\begin{align*}
|\nabla_i g(x) |^2 &\leq 4 \min\{|x_i|^2, |x_i|^r\},\\
|\nabla_i g(x) |^{r'} &\leq 2^{r'}  \min\{|x_i|^2, |x_i|^r\}.
\end{align*}
Consequently, on the set $\{G<t\}$, it holds a.e.
\begin{align*}
  \sum_{i=1}^n |\nabla_i g(x)|^2 &\leq  
  4 G(x) \leq 4t,\\
    \sum_{i=1}^n |\nabla_i g(x)|^{r'} &\leq  
    2^{r'} G(x) \leq 2^{r'} t.
\end{align*}
Therefore the proof is complete.
\end{proof}

\section{Criteria for measures on the real line}
\label{sec:R}
From now on we restrict to probability measures on the real line. In this setting, more tools 
are available. For several functional inequalities, workable equivalent criteria are available. They
are based on Hardy type inequalities, of the form 
\[ \int_{\RR^+} |f-f(0)|^p d\mu\le A \int_{\RR^+} |f'|^p d\nu.\]
 We refer to  \cite{BOOK-ane-et-al} for the history 
of the topic, from the original book of Hardy, Littlewood and P\'olya, to the general version by
Muckenhoupt. The textbook  \cite{BOOK-ane-et-al}  also mentions that such Hardy inequalities yield the following criterion for Poincar\'e inequalities on $\RR$ (where we include a numerical improvement from
\cite{MR2464097}):

Let $\mu$ be a probability measure on $\RR$, with median $m$. Let $\nu$ be a probability 
measure on $\RR$, and let $n$ denote the density of its absolutely continuous part. Then
the (possibly infinite) best constant $C_P$ such that for all smooth $f$,
\[ \Var_\mu(f)\le  C_P\int_{\RR} (f')^2 d\nu\]
verifies $\max(B_P^+,B_P^-)\le C_P \le 4\max(B_P^+,B_P^-) $, with 
\begin{equation}\label{eq:BP}
 B_P^+=\sup_{x>m} \mu([x,+\infty)) \int_m^x \frac1n, \qquad
   B_P^-=\sup_{x<m} \mu((-\infty,x]) \int_x^m \frac1n,
 \end{equation}
   where by convention $0\cdot \infty=0$.
   
   Bobkov and G\"otze \cite{MR1682772} extended the reach of these methods, by proving a similar statement for log-Sobolev inequalities of the form 
   \[ \Ent_\mu(f^2)\le C_{LS} \int_{\RR}  (f')^2 d\nu.\]
   Their result reads as the previous one, with different numerical constants and $B_P^+$, 
   $B_P^-$ replaced by     
   \begin{equation}
   \label{eq:BLS}
    B_{LS}^+=\sup_{x>m} \mu([x,+\infty)) \log\left( \frac{1}{\mu([x,+\infty))}\right)\int_m^x \frac1n,
    \end{equation}
and $B_{LS}^-$ defined similarly for $x<m$. 
 This criterion was later extended to the 
 Lata{\l}a--Oleszkiewicz inequality~\eqref{eq:LO}. 
Let $\mu$ be a probability measure on $\RR$.  Denote by $m$ the median of $\mu$ and by $n$ the density of its absolutely continuous part. Barthe and Roberto \cite{MR2052235} proved that $\mu$ satisfies the Lata{\l}a--Oleszkiewicz inequality~\eqref{eq:LO} if and only if $ \max\{ B_{LO(r)}^+, B_{LO(r)}^-\} <\infty$,
where
\begin{equation}
\label{eq:LO-crit}
 B_{LO(r)}^+ \coloneqq
 \sup_{x>m} \mu([x,\infty)) \log^{2/r'} \Bigl(1+\frac{1}{2\mu([x,\infty))}\Bigr) \int_m^x \frac{1}{n(t)} dt 
\end{equation}
and $B_{LO(r)}^-$ is defined similarly but with $x<m$. Moreover the best possible constant $C_{LO(r)}$ in~\eqref{eq:LO} is comparable to  $\max\{ B_{LO(r)}^+, B_{LO(r)}^-\}$, up to numerical constants which do not depend on $r\in(1,2)$.

In the subsequent paper  \cite{MR2430612}, Barthe and Roberto provided a criterion for the modified log-Sobolev inequality~\eqref{eq:mLS}. However, they did not reach a full equivalence.
Here is the outline of Theorem 10 in  \cite{MR2430612}. Let $d\mu(t)=n(t)dt$ be a probability
measure on $\RR$ with median $m$.
 If $\mu$ satisfies the Poincar\'e inequality with constant $C_P$ and $\max\{ B_{mLS(r)}^+, B_{mLS(r)}^- \} < \infty$, where
\begin{equation}
\label{eq:BmLS}
 B_{mLS(r)}^+ \coloneqq
 \sup_{x>m} \mu([x,\infty)) \log \Bigl(\frac{1}{\mu([x,\infty))}\Bigr) \Bigl(\int_m^x \frac{1}{n(t)^{r-1}} dt \Bigr)^{1/(r-1)}
\end{equation}
and $B_{mLS}^-$ is defined similarly but with $x<m$, then $\mu$ satisfies the modified log-Sobolev inequality~\eqref{eq:mLS} with constant
\begin{equation*}
C_{mLS} \leq 235C_P + 2^{r'+1}\max\{B_{mLS(r)}^+, B_{mLS(r)}^-\}. 
\end{equation*}
The converse implication is, so far, known only under the following additional assumption: there 
exists $\varepsilon>0$ such that for all $x\neq m$
\begin{equation}
\label{eq:hyp-crit-mLS}
 \frac{1}{n(x)^{r-1}}\ge \varepsilon  \int_{\min(m,x)}^{\max(m,x)} \frac{1}{n^{r-1}}\cdot
 \end{equation}
In this case, if 
$\mu$ satisfies the modified log-Sobolev inequality~\eqref{eq:mLS}, then
 \begin{equation*}
 \max\{B_{mLS(r)}^+, B_{mLS(r)}^-\}<\infty
 \end{equation*}
 and this quantity can be estimated in terms of the constant $C_{mLS(r)}$ up to constants depending on $r$ and $\varepsilon$. The Poincar\'e inequality is a classical consequence 
 of modified log-Sobolev inequality, exactly as in Lemma \ref{lem:LO-to-Poincare}.

Even though the above criteria involve simple concrete quantities, it does not seem easy to use 
them in order to reprove our main result Theorem \ref{thm:main:Toulouse} for measures on $\RR$.
 However, if one assumes for example that $d\mu(x) = \exp(-V(x)) dx$, $x\in\RR$, where $V$ is symmetric, of class $C^2$,
$ \liminf_{x\to\infty} V'(x) > 0$,
and
\begin{equation*}
 \lim_{x\to\infty} \frac{V''(x)}{V'(x)^2} =0,
\end{equation*}
then one can estimate the quantities $B^{+}_{LO(r)}, B^+_{mLS(r)}$ and show that the Lata{\l}a--Oleszkiewicz inequality~\eqref{eq:LO} is equivalent to the modified log-Sobolev inequality~\eqref{eq:mLS} and furthermore to the condition
\begin{equation}
\label{eq:workable_BR-crit}
 \limsup_{x\to\infty} \frac{V(x)}{V'(x)^{r'}} < \infty
\end{equation}
(see  Remark~21 in \cite{MR2430612}).

In the rest of the paper we use and develop one-dimensional criteria in order to study whether the modified log-Sobolev inequality is actually equivalent to other inequalities which are known to imply it.

\section{Weighted vs. modified log-Sobolev inequality}
\label{sec:weighted}

It is known that if a probability measure $\mu$ on $\RR^d$ satisfies a certain weighted log-Sobolev inequality (and an integrability condition), then it also satisfies a modified log-Sobolev inequality, see Theorem~3.4 in~\cite{MR3008255} (in the context of a specific measure on the real line a similar argument appears already  in the large entropy case of the proof of Theorem~3.1 from \cite{MR2198019}). The goal of this subsection is to show that the converse implication does not hold in general, even for measures on the real line.

First we present a workable criterion for the weighted log-Sobolev inequality.

\begin{proposition}
 \label{prop:char_weighted}
 Let $d\mu(x)=e^{-V(x)} dx$ be a probability measure on the real line. Let  $V\colon\RR\to\RR$ be  even and locally bounded. Assume that  in some neighborhood of $\infty$,  the function $V$ is of class $C^2$, and that
 \begin{enumerate}[(i)]
 \item $\liminf_{x\to\infty} V'(x) >0$,
\item 
  $ \lim_{x\to\infty} \frac{V''(x)}{V'(x)^2} =0$.
 \end{enumerate}
Then, there exists $C<\infty$ such that $\mu$ satisfies the following weighted log-Sobolev inequality: for every $f\colon\RR\to\RR$,
\begin{equation}
\label{eq:weighted}
 \Ent_{\mu}(f^2) \leq C \int_{\RR} f'(x)^2(1+|x|^{2-r}) d\mu(x),
\end{equation}
if and only if
\begin{equation*}
\limsup_{x\to\infty} \frac{V(x)}{|x|^{2-r} V'(x)^2} <\infty.
\end{equation*}
\end{proposition}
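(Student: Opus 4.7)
The plan is to reduce the claimed equivalence to a Hardy-type capacity criterion via the Bobkov--Götze theorem, and then use Laplace-type asymptotics (exploiting (i) and (ii)) to read off the announced threshold.

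First, I would apply the Bobkov--Götze criterion to the log-Sobolev inequality of the form $\Ent_\mu(f^2)\le C\int (f')^2 d\nu$ with $d\nu(t)=(1+|t|^{2-r})e^{-V(t)}dt$, whose absolutely continuous density is $n(t)=(1+|t|^{2-r})e^{-V(t)}$. Since $V$ is even, $\mu$ has median $0$ and the quantities $B^+_{LS}$ and $B^-_{LS}$ from~\eqref{eq:BLS} coincide. So~\eqref{eq:weighted} is equivalent to
\[
B:=\sup_{x>0}\ \mu([x,\infty))\,\log\!\Bigl(\tfrac{1}{\mu([x,\infty))}\Bigr)\int_0^x \frac{e^{V(t)}}{1+t^{2-r}}\, dt<\infty.
\]
Because $V$ is locally bounded and $2-r\in(0,1)$, the integrand is bounded on every compact set, and on such a set $\mu([x,\infty))\log(1/\mu([x,\infty)))$ is bounded too. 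Hence finiteness of $B$ only depends on the behaviour as $x\to+\infty$.

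Second, under (i) and (ii) I would establish the Laplace-type asymptotics
\[
\mu([x,\infty)) \sim \frac{e^{-V(x)}}{V'(x)},\qquad \log\!\bigl(1/\mu([x,\infty))\bigr)\sim V(x),\qquad \int_0^x\frac{e^{V(t)}}{1+t^{2-r}}dt\sim \frac{e^{V(x)}}{V'(x)(1+x^{2-r})}.
\]
The first one is classical: integration by parts gives $\int_x^\infty e^{-V}= e^{-V(x)}/V'(x) - \int_x^\infty (V''/V'^2)e^{-V}$, and (ii) forces the remainder to be $o(\mu([x,\infty)))$. The third follows by the same trick applied in reverse: write $e^{V(t)}/(1+t^{2-r}) = (e^{V(t)})'\cdot(V'(t)(1+t^{2-r}))^{-1}$, integrate by parts, and observe that
\[
\frac{d}{dt}\bigl(V'(t)(1+t^{2-r})\bigr)^{-1}=\ -\frac{V''(t)}{V'(t)^2(1+t^{2-r})}\ -\ \frac{(2-r)t^{1-r}}{V'(t)(1+t^{2-r})^2}
\]
is $o(1)\cdot(V'(t)(1+t^{2-r}))^{-1}$ as $t\to\infty$: the first piece is $o(1)\cdot(V'(1+t^{2-r}))^{-1}$ by (ii), and for the second piece $(2-r)t^{1-r}/(1+t^{2-r})=O(1/t)\to 0$, combined with $\liminf V'>0$. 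A standard Gronwall/bootstrap then upgrades this pointwise estimate to the claimed asymptotic for the integral. Finally, to pass from the first asymptotic to $\log(1/\mu([x,\infty)))\sim V(x)$, one only needs $\log V'(x)=o(V(x))$. This is a by-product of (i) and (ii): $V(x)\ge cx$ for large $x$ since $V'\ge c>0$ eventually, while $(1/V')'(x)=-V''/V'^2\to 0$ shows $1/V'(x)=o(x)$, i.e., $V'(x)$ cannot grow faster than linearly; thus $\log V'(x)=O(\log x)=o(V(x))$.

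Combining these three asymptotics,
\[
\mu([x,\infty))\,\log\!\Bigl(\tfrac{1}{\mu([x,\infty))}\Bigr)\int_0^x\frac{e^{V(t)}}{1+t^{2-r}}\,dt\ \asymp\ \frac{V(x)}{V'(x)^2\,(1+x^{2-r})}\qquad (x\to\infty),
\]
and $1+x^{2-r}\asymp x^{2-r}$ for large $x$. Hence $B<\infty$ iff $\limsup_{x\to\infty} V(x)/(|x|^{2-r}V'(x)^2)<\infty$, which is the desired equivalence.

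The main obstacle is the Laplace-type asymptotic for $J(x)=\int_0^x e^{V(t)}/(1+t^{2-r})dt$. It is not deep, but one has to check carefully that the derivative of the ``slowly varying'' weight $1/(1+t^{2-r})$ contributes only a small perturbation to the effective logarithmic derivative $V'(t)$ of the integrand; this is where the combination of $\liminf V'>0$, assumption (ii), and the elementary estimate $(2-r)t^{1-r}/(1+t^{2-r})=O(1/t)$ is used in an essential way.
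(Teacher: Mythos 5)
Your proposal follows essentially the same route as the paper: apply the Bobkov--G\"otze criterion to the measure with density $n(t)=(1+|t|^{2-r})e^{-V(t)}$, reduce to the behaviour at infinity, and use the Laplace-type asymptotics $\mu([x,\infty))\sim e^{-V(x)}/V'(x)$ and $\int_0^x e^{V}/(1+t^{2-r})\sim e^{V(x)}/\bigl(V'(x)(1+x^{2-r})\bigr)$ (the paper packages the latter by setting $W=V-\log(1+|x|^{2-r})$ and checking that $W$ again satisfies (i) and (ii), which is the same computation as your integration by parts).

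One sub-step is incorrectly justified, though its conclusion is true and the repair is immediate. You argue that $(1/V')'=-V''/V'^2\to 0$ gives $1/V'(x)=o(x)$ and hence that ``$V'$ cannot grow faster than linearly''; but $1/V'(x)=o(x)$ is a \emph{lower} bound on $V'$, not an upper bound, and in fact $V'$ may grow arbitrarily fast under (i)--(ii) (take $V(x)=e^{x}$ near infinity). The fact you actually need, $\log V'(x)=o(V(x))$, follows instead by integrating $(\log V')'=(V''/V'^2)\,V'\le \varepsilon V'$ on a neighbourhood of infinity, giving $\log V'(x)\le \varepsilon V(x)+C_\varepsilon$; alternatively, as in the paper, one can keep the numerator $V(x)+\log V'(x)$ and simply observe that $\log V'(x)/\bigl(|x|^{2-r}V'(x)^2\bigr)$ is automatically bounded because $V'$ is bounded away from $0$, so this term never affects the finiteness of the supremum.
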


\begin{remark}
 Condition $(ii)$ can be weakened to $ \limsup_{x\to\infty} \frac{|V''(x)|}{V'(x)^2} <1$.
\end{remark}

\begin{proof}[Proof of Proposition~\ref{prop:char_weighted}]
Denote $W(x):=V(x) - \log(1+|x|^{2-r})$, $x\in\RR$.
By the Bobkov--G\"otze criterion \cite{MR1682772} (see \eqref{eq:BLS}),
$\mu$ satisfies the weighted log-Sobolev inequality if and only if
\begin{equation}
\label{eq:BG_for_weighted}
 \sup_{x>0} \mu((x,\infty))\log\Bigl( \frac{1}{\mu((x,\infty))} \Bigr) \int_0^x e^{W(t)} dt < \infty.
\end{equation}
Of course, it suffices to investigate what happens for $x\to\infty$.
Note that
\begin{equation*}
\liminf_{x\to\infty} W'(x) = \liminf_{x\to\infty}\Bigl( V'(x) - \frac{(2-r)x^{1-r}}{1+x^{2-r}} \Bigr) > 0
\end{equation*}
(by Assumption (i)) and 
\begin{equation*}
\lim_{x\to\infty} \frac{W''(x)}{W'(x)^2} = 0
\end{equation*}
(by (ii) and the fact that $W''(x) = V''(x) + o(1)$). Thus, as $x\to\infty$,
\begin{align*}
 \mu((x,\infty)) = \int_x^{\infty} e^{-V(t)} dt &\sim \frac{e^{-V(x)}}{V'(x)},
 \\
 \int_0^x e^{W(t)} dt &\sim  \frac{e^{W(x)}}{W'(x)} = \frac{e^{V(x)}}{(1+|x|^{2-r})(V'(x)+o(1))}
\end{align*}
(here by `$\sim$' we mean that the ratio of both sides tends to $1$ as $x\to\infty$; to prove that this is indeed the case it  suffices to consider the ratio of the derivatives of both sides). Therefore, \eqref{eq:BG_for_weighted} holds if and only if
\begin{equation*}
 \limsup_{x\to\infty} \frac{V(x) + \log V'(x)}{(1+|x|^{2-r})(V'(x)+o(1))V'(x)} <\infty,
\end{equation*}
which, since $V'(x)$ is bounded away from zero as $x\to\infty$, happens if and only if
\begin{equation*}
\limsup_{x\to\infty} \frac{V(x)}{|x|^{2-r} V'(x)^2} <\infty. \qedhere
\end{equation*} 
\end{proof}

Our example is a modification of the example constructed by Cattiaux and Guillin \cite{MR2257848} to prove that the log-Sobolev inequality is strictly stronger than Talagrand's transportation cost inequality.

\begin{proposition}
 \label{prop:counterex_weighted}
For $r\in(1,2)$ and $ \max\{r/2,r-1/r\} < \beta -1 <  r-1/2$ define
\begin{equation*}
 U(x) =
 U_{r,\beta}(x) = |x|^{r+1} + (r+1) |x|^r \sin^2(x) + |x|^\beta,\quad x\in\RR.
\end{equation*}
Let $\mu_{r,\beta}$ be the probability measure with density proportional to $e^{-U_{r,\beta}(x)}$. Then $\mu_{r,\beta}$ satisfies the modified log-Sobolev inequality \eqref{eq:mLS} and the Lata\l{}a--Oleszkiewicz inequality \eqref{eq:LO} (with $d=1$).

On the other hand, $\mu_{r,\beta}$ does not satisfy the weighted log-Sobolev inequality~\eqref{eq:weighted}.
\end{proposition}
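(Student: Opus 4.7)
The plan is to use the one-dimensional criteria already recorded in the paper. Our main tools are Proposition~\ref{prop:char_weighted} for the weighted log-Sobolev inequality, and the criterion recorded in Remark~21 of \cite{MR2430612} for the Latała--Oleszkiewicz and modified log-Sobolev inequalities: under the hypotheses $V$ even, $C^2$, $\liminf V' > 0$, and $\lim V''/V'^2 = 0$, that criterion states that LO and mLS are both equivalent to $\limsup V(x)/V'(x)^{r'} < \infty$.

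First I would verify the regularity of $V := U_{r,\beta}$. The computation
\[V'(x) = (r+1)x^r(1+\sin 2x) + r(r+1)x^{r-1}\sin^2 x + \beta x^{\beta-1}\]
shows that all three summands are nonnegative, so $V$ is nondecreasing on $[0,\infty)$. The factor $1+\sin 2x$ vanishes exactly at the critical points $x_k := k\pi - \pi/4$, where $\sin^2 x_k = 1/2$ and $V'(x_k) = r(r+1)x_k^{r-1}/2 + \beta x_k^{\beta-1}$; the assumption $\beta-1 > r/2 > r-1$ (the last inequality using $r<2$) makes the second summand dominant, so $V'(x_k) \sim \beta x_k^{\beta-1}$ and $\liminf V' = +\infty$. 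A Taylor expansion in $\eta = x - x_k$ yields $V'(x_k + \eta) = \beta x_k^{\beta-1} + 2(r+1)x_k^r \eta^2 + O(x_k^{r-1})$ and $|V''(x_k + \eta)| \lesssim x_k^{r-1} + x_k^r |\eta|$. Optimizing the ratio $|V''|/V'^2$ over $\eta$ gives the worst-case estimate $\lesssim x_k^{(r+3-3\beta)/2}$, which tends to $0$ because $\beta > r/2 + 1 > (r+3)/3$; at non-critical points $V'(x) \asymp x^r$ makes the bound routine.

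With regularity in hand, the Remark~21 criterion reduces the LO/mLS inequalities to bounding $V(x)/V'(x)^{r'}$. The worst behaviour occurs at the critical points $x_k$, where
\[\frac{V(x_k)}{V'(x_k)^{r'}} \sim \frac{x_k^{r+1}}{(\beta x_k^{\beta-1})^{r'}} = \text{const}\cdot x_k^{r+1-r'(\beta-1)},\]
which stays bounded because the hypothesis $\beta-1 > r - 1/r = (r+1)/r'$ makes the exponent negative. Away from the $x_k$'s one has $V'(x) \asymp x^r$ and the ratio even tends to zero. Hence $\mu_{r,\beta}$ satisfies LO; the mLS inequality then follows either from the same criterion or directly from Theorem~\ref{thm:main:Toulouse}.

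On the other hand, Proposition~\ref{prop:char_weighted} asserts that the weighted log-Sobolev inequality \eqref{eq:weighted} holds iff $\limsup V(x)/(x^{2-r} V'(x)^2) < \infty$. Evaluated along $x_k$, this ratio is of order
\[\frac{x_k^{r+1}}{x_k^{2-r}\,x_k^{2(\beta-1)}} = x_k^{2r+1-2\beta},\]
whose exponent is strictly positive precisely when $\beta - 1 < r - 1/2$, so the weighted log-Sobolev inequality fails. The main obstacle is the uniform control of $V''/V'^2$ in a neighbourhood of each $x_k$: one must track both the polynomial floor $\beta x^{\beta-1}$ of $V'$ and its quadratic-in-$\eta$ growth, and the hypothesis $\beta > r/2 + 1$ is used precisely to suppress the worst-case ratio in the limit. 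Once this is done, the remaining three power-counting checks (Steps~2, 3, and 5) are short.
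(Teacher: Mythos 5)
Your proof is correct and follows essentially the same route as the paper: verify the regularity hypotheses, apply the workable Barthe--Roberto criterion \eqref{eq:workable_BR-crit} for the Lata\l{}a--Oleszkiewicz and modified log-Sobolev inequalities, and apply Proposition~\ref{prop:char_weighted} along the points $x_k=k\pi-\pi/4$ to refute the weighted inequality. The only difference is that your Taylor-expansion analysis of $V''/V'^2$ near the $x_k$ is more elaborate than necessary --- the crude global bounds $|V''(x)|\le Mx^r$ and $V'(x)\ge\beta x^{\beta-1}$ already give $|V''|/V'^2\lesssim x^{r-2(\beta-1)}\to 0$ since $\beta-1>r/2$, which is what the paper uses.
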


\begin{proof}
Let us first note that $\beta\in(r,r+1)$. For $x>0$,
 \begin{align*}
 U(x) &= x^{r+1} + (r+1) x^r \sin^2(x) + x^\beta, \\
 U'(x)
  &=  (r+1)(1+\sin(2x))x^r + (r+1)rx^{r-1}\sin^2(x) + \beta x^{\beta-1}.  
 \end{align*}
Clearly, $U'(x) \geq \beta x^{\beta-1}$; in particular $\liminf_{x\to\infty} U'(x) >0$. Moreover, for $x>1$, $|U''(x)|$ can be bounded by $M x^r$ for some constant $M=M(r,\beta)$. Thus,
\begin{equation*}
 \lim_{x\to\infty} \frac{|U''(x)|}{U'(x)^2} \leq \lim_{x\to\infty} \frac{M x^r}{\beta^2 x^{2(\beta-1)}} = 0,
\end{equation*}
since $\beta -1 > r/2$. We are thus in position to apply workable versions of the criteria for the modified and weighted log-Sobolev inequalities (note that the normalization of $\mu_{r,\beta}$ amounts to adding a constant to the potential $U$, which does not affect the calculations and reasoning below).

First note that
\begin{equation*}
 \lim_{x\to\infty} \frac{U(x)}{U'(x)^{r'}} \leq \lim_{x\to\infty} \frac{(r+3) x^{r+1}}{\beta^{r'} x^{(\beta-1)r'}} = 0,
\end{equation*}
since $(\beta -1)r' > (r-1/r)r' = r+1$. Thus, by the Barthe--Roberto criterion (see \eqref{eq:workable_BR-crit}), $\mu_{r,\beta}$ satisfies the modified log-Sobolev and the Lata\l{}a--Oleszkiewicz inequality.

On the other hand, for certain values of $x\to\infty$ (e.g., for $x=k\pi -\pi/4$, $k\in\NN$), we have $|U'(x)| \leq ((r+1)r+\beta) x^{\beta-1}$. Hence
\begin{equation*}
 \limsup_{x\to\infty} \frac{U(x)}{x^{2-r} U'(x)^2} \geq \lim_{x\to\infty} \frac{x^{r+1}}{x^{2-r} ((r+1)r+\beta)^2 x^{2(\beta-1)}} = \infty,
\end{equation*}
 since $\beta-1 < r-1/2$. Thus, by Proposition~\ref{prop:char_weighted} above, $\mu_{r,\beta}$ cannot satisfy the weighted log-Sobolev inequality.
\end{proof}

\begin{remark}
The introduction of \cite{MR2520723}, suggests that the results of our Theorem~\ref{thm:main:Toulouse} are contained in \cite{MR2446080}, namely that it follows from \cite{MR2446080} that the $F_r$-Sobolev inequality~\eqref{eq:FSob} implies the modified log-Sobolev inequality~\eqref{eq:mLS}. We would like to rectify this: Wang's paper \cite{MR2446080} deals with measures with \emph{faster} decay than Gaussian. He proves that in that setting an appropriate super Poincar\'e inequality (or equivalently, an appropriate $F$-Sobolev inequality) implies a certain weighted log-Sobolev inequality. However, in our setting (measures with tail decay \emph{slower} than Gaussian), we have an example of a measure which satisfies the modified log-Sobolev inequality~\eqref{eq:mLS} and the Lata\l{}a--Oleszkiewicz inequality~\eqref{eq:LO} (or equivalently, the $F_r$-Sobolev inequality~\eqref{eq:FSob}), but does not satisfy the weighted log-Sobolev inequality~\eqref{eq:weighted}. Therefore Theorem~\ref{thm:main:Toulouse} cannot be deduced from Wang's paper \cite{MR2446080}.
\end{remark}


\section{On potentials with vanishing derivatives}

\subsection{Motivation}

Recall that $r\in(1,2)$ is the parameter associated with the Lata{\l}a--Oleszkiewicz inequality~\eqref{eq:LO} and the modified log-Sobolev inequality~\eqref{eq:mLS}.
Throughout this section we consider symmetric probability measures on the real line of the form
\[
 d\mu(x) = d\mu_V(x) = \frac{1}{Z} \exp(-V(x)) dx, \quad x\in\RR,
\]
where $V\colon\RR\to\RR$ is even and $Z$ is the normalization constant.

It is easy to see that if $\varepsilon\in[0,1)$ and for $x\in \RR$
\begin{equation*}
V(x) = \big|x+\varepsilon\sin(x)\big|^r, \quad x>0,
\end{equation*}
then $\mu_V$ satisfies both the Lata{\l}a--Oleszkiewicz inequality~\eqref{eq:LO} and the modified log-Sobolev inequality~\eqref{eq:mLS}.
Indeed, if $\varepsilon\in[0,1)$, then $\liminf_{x\to+\infty} V'(x)>0$, $\lim_{x\to\infty} V''(x)/V'(x)^2=0$ and the claim follows  from the simplified versions of the Barthe--Roberto criteria (see~\eqref{eq:workable_BR-crit}).

This example becomes more interesting for $\varepsilon=1$: since for any integer $k$, $V'((2k+1)\pi)=0$  we cannot apply the simplified asymptotic versions of the criteria. In particular, one
would like to know if, for  measures with such potentials,  the modified log-Sobolev inequality~\eqref{eq:mLS} and the  Lata\l{}a--Oleszkiewicz inequality~\eqref{eq:LO} are valid simultaneously.

In the limit case $r=2$,  Cattiaux \cite{MR2188585} proved   that if 
\[
 V(x) = x^2+2\lambda x\sin(x), \quad x>0,
\]
then $\mu_V$ satisfies the classical log-Sobolev inequality if and only if $|\lambda|<1$ (note that this potential differs from $(x+\lambda\sin(x))^2$ only by a bounded perturbation). He used probabilistic methods which seem to rely on the fact that $r=2$. Below we present an analytic approach and obtain an extension of his results.

\subsection{Results}

For $\alpha > 1$ define
\begin{equation*}
 V_{\alpha}(x) = \big|x+ \sin(x)\big|^\alpha,\quad x\in \RR.
\end{equation*}
Let $\nu_{\alpha}$ be the probability measure
with density proportional to $\exp(-V_{\alpha})$:
\[
 d\nu_{\alpha}(x) = \frac{1}{Z_\alpha} \exp(-V_\alpha(x))dx, \quad x\in\RR.
\]

\begin{proposition}
\label{prop:vanishing-V-prim}
Let $\alpha>1$ and $r\in(1,2)$. The following assertions are equivalent 
\begin{enumerate}[(i)]
	\item $r\le r_0(\alpha):= \frac{3\alpha}{2\alpha+1}$,
	\item  $\nu_{\alpha}$ satisfies
the Lata{\l}a--Oleszkiewicz inequality~\eqref{eq:LO} with parameter $r$,
   \item  $\nu_{\alpha}$ satisfies the modified log-Sobolev inequality~\eqref{eq:mLS}   with parameter $r$.
\end{enumerate}
\end{proposition}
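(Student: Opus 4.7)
The plan is to establish the cycle (i) $\Rightarrow$ (ii) $\Rightarrow$ (iii) $\Rightarrow$ (i); the middle implication is immediate from Theorem~\ref{thm:main:Toulouse}, so the work is at the two endpoints.

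For (i) $\Leftrightarrow$ (ii), I would apply the Barthe--Roberto Hardy criterion~\eqref{eq:LO-crit}. The critical values of $x$ are the stationary points $x_k:=(2k+1)\pi$ of $V_\alpha$, where $V_\alpha'(x_k)=V_\alpha''(x_k)=0$ and a Taylor expansion of $1+\cos$ around $\pi$ yields $V_\alpha(x_k+s)-V_\alpha(x_k)\sim \alpha x_k^{\alpha-1}s^{3}/6$ as $s\to 0$. A Laplace-type estimate at $x_k$ then gives
\[
 \nu_\alpha([x_k,\infty))\asymp e^{-x_k^\alpha} x_k^{-(\alpha-1)/3},\qquad
 \int_0^{x_k} e^{V_\alpha(t)}\,dt\asymp e^{x_k^\alpha} x_k^{-(\alpha-1)/3},
\]
so the quantity inside the supremum in~\eqref{eq:LO-crit} is of order $x_k^{2\alpha/r'-2(\alpha-1)/3}$, uniformly bounded in $k$ iff $r\le r_0(\alpha)$. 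A complementary computation for $x$ bounded away from $\{(2k+1)\pi:k\in\NN\}$, where $V_\alpha'(x)\asymp x^{\alpha-1}$, yields only the weaker constraint $r\le \alpha$, automatic under $r\le r_0(\alpha)$.

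For (iii) $\Rightarrow$ (i), the Barthe--Roberto converse for mLS cannot be invoked: condition~\eqref{eq:hyp-crit-mLS} fails at every $x_k$, since $\int n^{-(r-1)}$ accumulates by Laplace right at the stationary point and dwarfs $n(x_k)^{-(r-1)}$. I would instead violate \eqref{eq:mLS} directly for $r>r_0(\alpha)$ by testing against a Hardy-type extremal. Set $M_k^2:=1/\nu_\alpha([x_k,\infty))$ and let $g_k$ be the minimizer of $\int_0^{x_k}(g')^{r'}e^{-V_\alpha}\,dx$ subject to $g_k(0)=1$ and $g_k(x_k)=M_k^{2/r'}$; the Euler--Lagrange equation yields $g_k'(t)\propto e^{(r-1)V_\alpha(t)}$. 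Extend $g_k$ to be constant outside $[0,x_k]$, smooth at the endpoints (without affecting leading asymptotics) and set $f_k:=g_k^{r'/2}$. Setting $\psi_k:=\log f_k = (r'/2)\log g_k$, one checks $|\psi_k'|=(r'/2)\,g_k'/g_k \asymp x_k^{(\alpha-1)/3}$ in the region near $x_k$ that dominates the integrals, which for $k$ large lies in the $|\cdot|^{r'}$-regime of $H_{r'}$. The algebraic identity $(g')^{r'}e^{-V_\alpha}=c_k^{r'} e^{(r-1)V_\alpha}$ then gives
\[
 \int H_{r'}\bigl(|f_k'|/f_k\bigr)\,f_k^2\,d\nu_\alpha
 \asymp \frac{M_k^2}{\bigl(\int_0^{x_k} n^{-(r-1)}\,dt\bigr)^{1/(r-1)}}
 \asymp x_k^{(\alpha-1)r'/3},
\]
while $\Ent_{\nu_\alpha}(f_k^2)\asymp\log(M_k^2)\asymp x_k^\alpha$ by construction. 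Requiring \eqref{eq:mLS} to hold uniformly in $k$ forces $\alpha \le (\alpha-1)r'/3$, which rearranges exactly to $r\le r_0(\alpha)$.

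The main obstacle is the design of the test function in step (iii)$\Rightarrow$(i): a $1$-Lipschitz probe only yields the coarser bound $r\le\alpha$ coming from the concentration implied by mLS, so the sharp threshold $r_0(\alpha)$ must come from exploiting the anomalous flatness of $V_\alpha$ at the stationary points. The auxiliary $r'$-Hardy extremal $g_k$ locks the transition into the cube-root Laplace scale $x_k^{-(\alpha-1)/3}$, and the change of variables $f=g^{r'/2}$ is the key algebraic step that reduces the nonlinear $H_{r'}$-energy to a tractable linear Hardy problem, reproducing the asymptotic of the quantity $B_{mLS(r)}^+$ from~\eqref{eq:BmLS}. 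The secondary technical tasks are to bound contributions outside the $r'$-regime (where $|\psi_k'|<1$, i.e. away from $x_k$) and to verify that endpoint smoothing does not affect the leading Laplace asymptotics.
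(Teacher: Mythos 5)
Your overall architecture matches the paper's: (i) $\Leftrightarrow$ (ii) via the Barthe--Roberto criterion \eqref{eq:LO-crit}, with the decisive contribution coming from the stationary points $x_k=(2k+1)\pi$ at the Laplace scale $x_k^{-(\alpha-1)/3}$, and (i) $\Rightarrow$ (iii) via Theorem~\ref{thm:main:Toulouse}. For the integral estimates the paper does not use a pointwise Laplace expansion but a shift-and-compare argument (substituting $t=u\pm u^{-\beta}$ with $\beta=(\alpha-1)/3$ and showing the shifted integral contracts by a fixed factor $e^{-c}$), which yields $\int_x^\infty e^{-V}\lesssim e^{-V(x)}x^{-\beta}$ and $\int_0^x e^{V}\lesssim e^{V(x)}x^{-\beta}$ \emph{uniformly} in large $x$. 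Your version states the asymptotics only at $x=x_k$ and for $x$ bounded away from the stationary points, so you still owe the intermediate regime (distance to $x_k$ between $x_k^{-\beta}$ and a constant) before you can control the supremum in \eqref{eq:LO-crit}; this is routine but not free, and the paper's uniform bound is the cleaner device.

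The one genuine error is your claim that condition \eqref{eq:hyp-crit-mLS} fails at the $x_k$, which is your stated reason for abandoning the Barthe--Roberto necessity statement. With $n\propto e^{-V}$ the condition reads $e^{(r-1)V(x)}\ge\varepsilon\int_0^x e^{(r-1)V}$, and the same shift-and-compare estimate applied to $(r-1)V$ gives $\int_0^x e^{(r-1)V}\lesssim e^{(r-1)V(x)}\,x^{-(\alpha-1)/3}$: the Hardy mass does pile up just below $x$, but on a window of shrinking length, so the integral is $o\bigl(e^{(r-1)V(x)}\bigr)$ and the hypothesis holds with room to spare. You have the comparison backwards --- \eqref{eq:hyp-crit-mLS} fails for potentials with long flat or decreasing stretches, not for a rapidly increasing one. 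This is exactly the route the paper takes for (iii) $\Rightarrow$ (i): verify \eqref{eq:hyp-crit-mLS}, then bound $B^+_{mLS(r)}$ from below at the $x_k$. Your replacement --- the test functions $f_k=g_k^{r'/2}$ with $g_k'\propto e^{(r-1)V}$ --- is nevertheless sound: the exponent bookkeeping (energy $\asymp M_k^2\bigl(\int_0^{x_k}e^{(r-1)V}\bigr)^{-1/(r-1)}\asymp x_k^{(\alpha-1)r'/3}$ against entropy $\asymp x_k^{\alpha}$) is correct and reproduces the threshold $r_0(\alpha)$; it amounts to re-proving the necessity half of the criterion in this instance. So your detour works, but it is motivated by a miscalculation and costs you extra verifications (the $t^2$-regime contribution near the origin, $\int f_k^2\,d\nu\asymp 1$, endpoint smoothing) that invoking the criterion would have absorbed.
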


\begin{remark}
For  Cattiaux's example the threshold is $r_0(2)=6/5$.
\end{remark}

The  threshold $r_0(\alpha)$ in Inequalities \eqref{eq:LO} and \eqref{eq:mLS} suggests a weaker concentration than the one actually exhibited by the measures $\nu_{\alpha}$, which 
 is better described by transportation cost inequalities, see \cite{Tal,TalConcMeasure}.
Let $\alpha\in(1,2]$. Recall that we say that a probability measure $\mu$ on the real line satisfies the transport--entropy inequality $\mathbf{T}_{\min\{x^2, |x|^\alpha\}}(a)$  if for any probability measure $\sigma$ on the real line
\[
 \mathcal{T}_{\alpha,a} (\mu,\sigma) \leq H(\sigma|\mu),
\]
where $\mathcal{T}_{\alpha,a}$ is the optimal transport cost between the measures $\mu$ and $\sigma$ with respect to the cost function $t\mapsto \min\{ (at)^2, |at|^\alpha\}$, i.e.,
\[
\mathcal{T}_{\alpha,a} (\mu,\sigma) = \inf_\pi \int_\RR\int_\RR \min\{ (a(x-y))^2, |a(x-y)|^\alpha\} d\pi(x,y),
\]
where the infimum runs over the set of couplings between $\mu$ and $\sigma$, and $H(\sigma|\mu)$ stands for the relative entropy of $\sigma$ with respect to $\mu$.

\begin{proposition}
\label{prop:vanishing-V-prim-transport}
Let $\alpha\in(1,2]$.
 The measure $\nu_{\alpha}$ satisfies the transport--entropy inequality $\mathbf{T}_{\min\{x^2, |x|^\alpha\}}(a)$ with some constant $a>0$ depending only on $\alpha$.
\end{proposition}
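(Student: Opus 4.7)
The reference measure $\mu_\alpha$ with density proportional to $e^{-|y|^\alpha}$ is known to satisfy $\mathbf{T}_{\min\{x^2,|x|^\alpha\}}$ for all $\alpha\in(1,2]$ by classical results (\cite{Tal} for $\alpha=2$; \cite{MR2198019} and subsequent works for $\alpha\in(1,2)$). My strategy is to transfer this property to $\nu_\alpha$ by means of a one-dimensional Hardy/Muckenhoupt-type criterion, in the spirit of the Barthe--Roberto characterizations of~\eqref{eq:LO} and~\eqref{eq:mLS} recalled in Section~\ref{sec:R}, and then verify this criterion using the explicit form of the potential $V_\alpha$.

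First, I would record (or derive) a $1$D criterion for transport--entropy inequalities of the following shape: for a symmetric absolutely continuous probability measure $\mu$ on $\RR$ with density $n$ and median $0$, the inequality $\mathbf{T}_{\min\{x^2,|x|^\alpha\}}(a)$ holds for some $a>0$ if and only if a quantity of the form $\sup_{x>0} A\bigl(\mu([x,\infty))\bigr)\cdot \int_0^x dt/n(t)$ is finite, where the weight $A$ is determined by the (Legendre dual of) the cost function. The key structural point is that this weight $A$ is milder than the weight $\mu([x,\infty))\log^{2/\alpha'}\!\bigl(1+1/(2\mu([x,\infty)))\bigr)$ that appears in the Lata\l{}a--Oleszkiewicz criterion~\eqref{eq:LO-crit} for $r=\alpha$; this extra slack is precisely what allows $\mathbf{T}_{\min\{x^2,|x|^\alpha\}}$ to hold even though, by Proposition~\ref{prop:vanishing-V-prim}, $\nu_\alpha$ fails~\eqref{eq:LO} (and~\eqref{eq:mLS}) at the full rate~$\alpha$.

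Next, I would verify this criterion for $\nu_\alpha$. The tail estimate $\nu_\alpha([x,\infty)) \asymp e^{-x^\alpha}/x^{\alpha-1}$, up to a uniformly bounded oscillatory factor, follows from $\bigl||t+\sin t|^\alpha - |t|^\alpha\bigr| = O(|t|^{\alpha-1})$. For the Hardy integral $\int_0^x e^{|t+\sin t|^\alpha}dt$, I would use a Laplace-type argument together with the pointwise comparison above to show that it also behaves like $e^{x^\alpha}/x^{\alpha-1}$ up to multiplicative constants. Combining the tail estimate and the Hardy integral estimate with the milder weight $A$ yields finiteness of the supremum and hence the desired inequality, with a constant $a$ depending only on $\alpha$.

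The main obstacle is controlling $\int_0^x e^{V_\alpha(t)}dt$ in spite of the vanishing of $V_\alpha'(t)=\alpha(t+\sin t)^{\alpha-1}(1+\cos t)$ at $t=(2k+1)\pi$, where $V_\alpha$ is locally nearly constant and $e^{V_\alpha}$ therefore does not decrease from its local maximum as sharply as in the smooth case. For the LO/mLS Hardy criteria this effect is severe enough to degrade the attainable parameter from $\alpha$ down to $r_0(\alpha)=3\alpha/(2\alpha+1)$. For the transport--entropy criterion, the weaker weight $A$ on the tail gives enough room that one only needs to bound the local contribution of each flat by a uniformly bounded constant; a careful calculation of the local behaviour of $V_\alpha$ near $(2k+1)\pi$ (where $\sin(\cdot)$ provides a cubic correction) shows that the cumulative contribution of the flats to the Hardy integral is only a bounded multiplicative correction compared with $\int_0^x e^{|t|^\alpha}dt$, so the full rate~$\alpha$ is preserved. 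This is precisely what captures, at the analytic level, the fact that the concentration implied by $\mathbf{T}_{\min\{x^2,|x|^\alpha\}}$ is strictly stronger than what can be read off from \eqref{eq:mLS} or \eqref{eq:LO} for these measures.
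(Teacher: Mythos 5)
There is a genuine gap, and it occurs at both of the two load-bearing steps of your plan. First, the criterion you propose to ``record (or derive)'' --- a single Hardy/Muckenhoupt-type condition of the product form $\sup_{x>0} A\bigl(\mu([x,\infty))\bigr)\int_0^x dt/n(t)<\infty$ characterizing $\mathbf{T}_{\min\{x^2,|x|^\alpha\}}$ --- is not available and cannot exist in that form. The actual one-dimensional characterization (Theorem~1.1 of \cite{GozW2}, which is what the paper invokes) is the conjunction of a Poincar\'e inequality and a growth condition on the monotone rearrangement $F_\nu^{-1}\circ F_{\exp}$, i.e.\ a condition involving only the tail function $N(t)=-\log\bigl(\tfrac{2}{Z}\int_t^\infty e^{-V}\bigr)$ and \emph{not} the Hardy integral $\int_0^x e^{V}$. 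A product-form criterion would have to have bounded weight $A$ to be satisfied by $\nu_\alpha$ (see below), and with bounded $A$ it reduces to the Muckenhoupt condition for Poincar\'e, which is satisfied by measures (e.g.\ the symmetric exponential) that fail $\mathbf{T}_{\min\{x^2,|x|^\alpha\}}$ for every $a>0$ when $\alpha>1$.

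Second, your key estimate $\int_0^x e^{V_\alpha(t)}\,dt\asymp e^{x^\alpha}/x^{\alpha-1}$ is false, and the claim that the flats contribute ``only a bounded multiplicative correction'' is precisely what the paper's Proposition~\ref{prop:vanishing-V-prim} disproves: at $x_k=(2k+1)\pi$ one has $\int_0^{x_k}e^{V_\alpha}\ \gtrsim\ e^{V_\alpha(x_k)}\,x_k^{-(\alpha-1)/3}$ (the cubic flattening of $t+\sin t$ makes $V_\alpha$ essentially constant on an interval of length $\asymp x_k^{-(\alpha-1)/3}$, not $\asymp x_k^{-(\alpha-1)}$), which exceeds $e^{V_\alpha(x_k)}x_k^{-(\alpha-1)}$ by the unbounded factor $x_k^{2(\alpha-1)/3}$. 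This excess is exactly why the Lata{\l}a--Oleszkiewicz and modified log-Sobolev parameters degrade to $r_0(\alpha)=3\alpha/(2\alpha+1)$, so any weight $A$ strong enough to detect the $\alpha$-rate would blow up along $(x_k)$ in your scheme. The correct route --- the one the paper takes --- avoids $\int_0^x e^{V}$ entirely: it uses the Poincar\'e inequality already obtained in Proposition~\ref{prop:vanishing-V-prim}, reduces via \cite{GozW2} to showing $b^\alpha|x-y|^\alpha\le 1+\bigl|N(|x|)\sgn(x)-N(|y|)\sgn(y)\bigr|$, and verifies this using only the lower tail bound $N(t)\ge V_\alpha(t)\ge t^\alpha/2$ together with the shift estimate $V_\alpha(t+s)\ge V_\alpha(t)+s^\alpha/2^\alpha$ for $s\ge 2\pi$, which yields $e^{-N(t+s)}\le e^{-N(t)-s^\alpha/2^\alpha}$. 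If you want to salvage your approach, you would need to replace your step one by this (or an equivalent) tail-based criterion and drop the Hardy-integral computation altogether.
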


One can also wonder what happens if we allow the potential $V$ to have even bigger oscillations. For $\alpha>1$ and $\lambda >1$ define
\begin{equation*}
 V_{\alpha,\lambda}(x) = |x+ \lambda\sin(x)|^\alpha,\quad x\in\RR,
\end{equation*}
and let $\nu_{\alpha,\lambda}$ be the probability measure
 with density proportional to $\exp(-V_{\alpha,\lambda})$. 

\begin{proposition}
 \label{prop:vanishing-V-prim-big-lambda}
Let $\alpha>1$ and $\lambda >1$.  The measure $\nu_{\alpha,\lambda}$ does not satisfy 
the Poincar\'e inequality~\eqref{eq:Poinc-intro}.
\end{proposition}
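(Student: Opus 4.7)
The plan is to invoke the Muckenhoupt-type criterion for Poincaré inequalities on the line recalled in~\eqref{eq:BP}, and to show that the one-sided Muckenhoupt functional
\[
 B_P^+ \;=\; \sup_{x>0}\, \biggl(\int_x^\infty e^{-V_{\alpha,\lambda}(t)}\,dt\biggr) \biggl(\int_0^x e^{V_{\alpha,\lambda}(t)}\,dt\biggr)
\]
is infinite (by symmetry the median of $\nu_{\alpha,\lambda}$ is $0$, and the normalization constant $Z_{\alpha,\lambda}$ cancels out). The geometric picture is that for $\lambda>1$ the auxiliary function $g(x) := x + \lambda\sin(x)$ ceases to be monotone: $g'(x) = 1 + \lambda\cos(x)$ has zeros at $\pm x_1 + 2\pi\mathbb{Z}$, where $x_1 := \arccos(-1/\lambda) \in (\pi/2,\pi)$. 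Hence $V_{\alpha,\lambda} = |g|^\alpha$ carries, on the positive half line, infinitely many local maxima (``peaks'' $x_{pk,n} := x_1 + 2\pi n$) and local minima (``troughs'' $x_{tr,n} := 2\pi - x_1 + 2\pi n$), with each $x_{tr,n}$ lying immediately to the right of $x_{pk,n}$.

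The heart of the argument is the identity
\[
 g(x_{pk,n}) - g(x_{tr,n}) \;=\; 2\bigl(h(\lambda)-\pi\bigr),
 \qquad h(\lambda) := \arccos(-1/\lambda) + \sqrt{\lambda^2-1},
\]
combined with the observation that $h(1) = \pi$ and
\[
 h'(\lambda) \;=\; -\frac{1}{\lambda\sqrt{\lambda^2-1}} + \frac{\lambda}{\sqrt{\lambda^2-1}} \;=\; \frac{\sqrt{\lambda^2-1}}{\lambda} \;>\; 0
\]
for $\lambda>1$. Consequently the gap $\delta := 2(h(\lambda)-\pi)$ is a strictly positive constant whenever $\lambda>1$. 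Since $g(x_{pk,n})$ and $g(x_{tr,n})$ are both positive and of order $2\pi n$ for $n$ large, the mean value theorem yields
\[
 V_{\alpha,\lambda}(x_{pk,n}) - V_{\alpha,\lambda}(x_{tr,n}) \;=\; g(x_{pk,n})^\alpha - g(x_{tr,n})^\alpha \;\geq\; \alpha\,\delta\,(\pi n)^{\alpha-1},
\]
which tends to $+\infty$ as $n\to\infty$ because $\alpha>1$.

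To exploit this in $B_P^+$, I would evaluate at $x = x_{pk,n}$ and bound both factors from below by elementary Laplace-type estimates. Near $x_{pk,n}$ and $x_{tr,n}$ (where $g$ stays bounded away from $0$ for $n\geq N_0(\lambda)$) the potential $V_{\alpha,\lambda}$ is smooth, and a short computation using $V' = \alpha g^{\alpha-1}g'$ and $V''=\alpha(\alpha-1)g^{\alpha-2}(g')^2 + \alpha g^{\alpha-1}g''$ gives $|V_{\alpha,\lambda}''|$ of order $n^{\alpha-1}$ on a fixed-radius neighborhood of each extremum. A Taylor bound then shows that $V_{\alpha,\lambda}$ varies by at most $1$ on an interval of length $\varepsilon_n := c\,n^{-(\alpha-1)/2}$ centred at $x_{pk,n}$ (respectively $x_{tr,n}$). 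Restricting the integrals to such intervals yields
\[
 \int_0^{x_{pk,n}} e^{V_{\alpha,\lambda}}\,dt \cdot \int_{x_{pk,n}}^\infty e^{-V_{\alpha,\lambda}}\,dt \;\gtrsim\; \varepsilon_n^{2}\, e^{V_{\alpha,\lambda}(x_{pk,n})-V_{\alpha,\lambda}(x_{tr,n})} \;\gtrsim\; \frac{\exp\bigl(\alpha\,\delta\,(\pi n)^{\alpha-1}\bigr)}{n^{\alpha-1}},
\]
which diverges as $n\to\infty$. Therefore $B_P^+ = +\infty$ and $\nu_{\alpha,\lambda}$ does not satisfy a Poincaré inequality.

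The principal obstacle is the key quantitative fact $h(\lambda) > \pi$ for $\lambda>1$---this encodes precisely why $\lambda=1$ is the threshold, since at $\lambda=1$ one has $\delta=0$ and $g$ is still monotone (with horizontal inflections), making the analysis above collapse. The Laplace-type step is essentially routine; the minor nuisance that $V_{\alpha,\lambda}$ may fail to be $C^2$ at zeros of $g$ when $\alpha\in(1,2)$ is irrelevant, because the extrema $x_{pk,n}$ and $x_{tr,n}$ used in the argument are, for $n$ large, bounded away from zeros of $g$.
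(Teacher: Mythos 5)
Your proof is correct. Both you and the paper reduce the statement to showing that the Muckenhoupt functional $\sup_{x>0}\bigl(\int_x^\infty e^{-V}\bigr)\bigl(\int_0^x e^{V}\bigr)$ from \eqref{eq:BP} is infinite, and both exploit the fact that for $\lambda>1$ the potential genuinely decreases on infinitely many intervals going to infinity; the implementations, however, differ. You locate the exact critical points $x_{pk,n}$ and $x_{tr,n}$, quantify the total drop of $V$ across each decreasing interval via the identity $g(x_{pk,n})-g(x_{tr,n})=2(h(\lambda)-\pi)>0$, and then run a Laplace-type estimate on shrinking windows of length $\asymp n^{-(\alpha-1)/2}$ around each extremum, which requires the second-derivative bound $|V''|\lesssim n^{\alpha-1}$. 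The paper instead evaluates the functional at the midpoints $x_k=(2k+1)\pi$ of the decreasing intervals and uses only two elementary facts: $V$ is decreasing on a \emph{fixed} window $[x_k-\delta_0,x_k+\delta_0]$, so that $\int_{x_k}^{\infty}e^{-V}\ge \delta_0 e^{-V(x_k)}$, and the convexity bound $V(x_k-h)-V(x_k)\ge \alpha x_k^{\alpha-1}(\lambda\sin h-h)$, where $\lambda\sin h - h>0$ for small $h>0$ forces $e^{-V(x_k)}\int_0^{x_k}e^{V}\to\infty$. The paper's route is shorter, needing neither the location of the extrema, nor the fact $h(\lambda)>\pi$, nor any Taylor control of $V$ near its critical points; your route is more computational but yields an explicit divergence rate $n^{-(\alpha-1)}\exp\bigl(\alpha\delta(\pi n)^{\alpha-1}\bigr)$ for the Muckenhoupt product. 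The two threshold mechanisms — positivity of $\lambda\sin h-h$ near $0$ in the paper, positivity of $h(\lambda)-\pi$ in your argument — are equivalent manifestations of $\lambda>1$.
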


\subsection{Proofs}

In the next two proofs  we shall omit the subscript $\alpha$ in the notation and write $V$, $\nu$, and $Z$ instead of $V_\alpha$, $\nu_{\alpha}$   and $Z_\alpha$, respectively.

\begin{proof}[Proof of Proposition~\ref{prop:vanishing-V-prim}.]
Fix  $\alpha >1$.
Let us start with proving that the Lata{\l}a--Oleszkiewicz inequality~\eqref{eq:LO} holds for $r\leq r_0(\alpha)$. For $x>0$ we have $V(x) = (x+\sin(x))^\alpha $ and
 \begin{equation*}
  V'(x) = \alpha (x+\sin(x))^{\alpha -1}(1+\cos(x)).
 \end{equation*}
 Denote for simplicity $\beta = (\alpha-1)/3$. For $x>0$ such that $1-\beta x^{-\beta-1}\geq 0$, we have
 \begin{align}
  \notag \int_x^\infty e^{-V(t)} dt
&=   \int_x^{x+x^{-\beta}} e^{-V(t)} dt +   \int_{x+x^{-\beta}}^\infty e^{-V(t)} dt\\
&\leq \frac{e^{-V(x)}}{x^{\beta}}  + \int_x^{\infty} e^{-V(u+u^{-\beta})} (1-\beta u^{-\beta-1} )du, \notag\\
&\leq \frac{e^{-V(x)}}{x^{\beta}}  + \int_x^{\infty} e^{-V(u+u^{-\beta})} du, \label{eq:exp--V}
 \end{align}
where we used the fact the $V$ is increasing on $(0,\infty)$ and substituted $t=u+u^{-\beta}$ in the second integral. Note that, by the convexity of the function $x\mapsto x^{\alpha}$, $x>0$,
\begin{align*}
 V(u+u^{-\beta}) - V(u)
 &= \bigl(u +  u^{-\beta} +\sin(u +  u^{-\beta})\bigr)^\alpha - (u+\sin(u))^\alpha\\
  &\geq \alpha(u+\sin(u))^{\alpha-1}\bigl(u^{-\beta} +\sin(u +  u^{-\beta}) -\sin(u)\bigr)\\
  &= \alpha(u+\sin(u))^{\alpha-1}\bigl(u^{-\beta} +2\sin(u^{-\beta}/2)\cos(u+u^{-\beta}/2)\bigr)\\
  &\geq \alpha(u+\sin(u))^{\alpha-1}\bigl(u^{-\beta} -2\sin(u^{-\beta}/2)\bigr)\\
  &\geq c_1 u^{\alpha-1} u^{-3\beta} = c_1
\end{align*}
for some $c_1=c_1(\alpha)>0$ (for, say, $u\geq1$). Thus~\eqref{eq:exp--V} implies that for sufficiently large $x$,
\begin{equation}
  \int_x^\infty e^{-V(t)} dt
\leq \frac{1}{1-e^{-c_1}}\cdot\frac{e^{-V(x)}}{x^{(\alpha-1)/3}}.
\label{eq:exp--V-final}
\end{equation}

We proceed similarly with $\int_0^x e^{V(t)} dt$. First note that, for $t\geq 2\pi$,
\[
 V(t-2\pi) + (2\pi)^\alpha = (t-2\pi + 
\sin(t))^{\alpha} + (2\pi)^{\alpha} \leq (t+\sin(t))^\alpha =  V(t),
\]
since $\alpha\geq 1$. Hence, 
\begin{equation*}
 \int_{2(k-1)\pi}^{2k\pi} e^{V(t)} dt = \int_{2k\pi}^{2(k+1)\pi} e^{V(t-2\pi)} dt \leq e^{-(2\pi)^\alpha} \int_{2k\pi}^{2(k+1)\pi} e^{V(t)} dt
\end{equation*}
and consequently, for $x\geq 2\pi$, 
 \begin{align*}
 \int_0^{x} e^{V(t)} dt &\leq \Bigl(1 +\sum_{k=0}^\infty e^{-k(2\pi)^\alpha} \Bigr)\int_{x-2\pi}^x e^{V(t)} dt\\
 &= \Bigl(1+\frac{1}{1-e^{-(2\pi)^\alpha}}\Bigr) \int_{x-2\pi}^x e^{V(t)} dt
\end{align*}
(recall that $V$ is increasing on $(0,\infty)$).

As above, denote for simplicity $\beta = (\alpha-1)/3$. For $x>2\pi$ we have
 \begin{align}
  \notag \int_{x-2\pi}^x e^{V(t)} dt
&=   \int_{x-x^{-\beta}}^x e^{V(t)} dt +   \int_{x-2\pi}^{x-x^{-\beta}} e^{V(t)} dt\\
&\leq \frac{e^{V(x)}}{x^{\beta}}  + \int_{l(x)}^{x} e^{V(u-u^{-\beta})} (1+\beta u^{-\beta-1} )du, \notag\\
&\leq \frac{e^{V(x)}}{x^{\beta}}  + (1+\beta(x-2\pi)^{-\beta-1})\int_{x-2\pi}^x e^{V(u-u^{-\beta})} du, \label{eq:exp-+V}
 \end{align}
where we used the fact the $V$ is increasing on $(0,\infty)$ and substituted $t=u-u^{-\beta}$ in the second integral ($l(x)>x-2\pi$ is the unique number such that $x-2\pi = l(x) -l(x)^{-\beta}$). Note that
\begin{align*}
 V(u-u^{-\beta}) & - V(u)
 = \bigl(u -  u^{-\beta} +\sin(u -  u^{-\beta})\bigr)^\alpha - (u+\sin(u))^\alpha\\
 \leq &-\alpha\bigl(u -  u^{-\beta} +\sin(u -  u^{-\beta})\bigr)^{\alpha-1}\bigl(u^{-\beta} -\sin(u -  u^{-\beta}) +\sin(u)\bigr)\\
  = &-\alpha\bigl(u -  u^{-\beta} +\sin(u -  u^{-\beta})\bigr)^{\alpha-1}\bigl(u^{-\beta} - 2\sin(u^{-\beta}/2)\cos(u-u^{-\beta}/2)\bigr)\\
  \leq &-\alpha\bigl(u -  u^{-\beta} +\sin(u -  u^{-\beta})\bigr)^{\alpha-1}\bigl(u^{-\beta} - 2\sin(u^{-\beta}/2)\bigr)\\
  \leq  &-c_2 u^{\alpha-1} u^{-3\beta} = -c_2
\end{align*}
for some $c_2=c_2(\alpha)>0$ and sufficiently large $u>0$. Thus~\eqref{eq:exp-+V} implies that for sufficiently large $x$,
\begin{align*}
  \int_{x-2\pi}^x e^{V(t)} dt
&\leq \frac{e^{V(x)}}{x^{\beta}}  + (1+\beta(x-2\pi)^{-\beta-1})e^{-c_2}\int_{x-2\pi}^x e^{V(u)} du\\
&\leq \frac{e^{V(x)}}{x^{\beta}}  + e^{-\widetilde{c}_2}\int_{x-2\pi}^x e^{V(u)} du,
\end{align*}
for some $\widetilde{c}_2 = \widetilde{c}_2(\alpha)>0$. Thus, for sufficiently large $x$,
\begin{equation}
  \int_{0}^x e^{V(t)} dt
\leq \frac{1+\frac{1}{1-e^{-(2\pi)^\alpha}}}{1-e^{-\widetilde{c}_2}}\cdot\frac{e^{V(x)}}{x^{(\alpha-1)/3}}.
\label{eq:exp-+V-final}
\end{equation}

For $q>2$ the function $t\mapsto t\log^{2/q}(1+1/(2t))$ is increasing for small enough positive $t$. Using \eqref{eq:exp--V-final} and \eqref{eq:exp-+V-final}, we see that for sufficiently large $x>0$,
\begin{align*}
 \MoveEqLeft
 \nu([x,\infty]) \log^{2/r'}\Bigl(1+\frac{1}{2\nu([x,\infty))} \Bigr) \int_0^x e^{V(t)} dt\\
 &\lesssim
 \frac{e^{-V(x)}}{x^{(\alpha-1)/3}} \log^{2/r'}\Bigl(Z e^{V(x)} x^{(\alpha-1)/3}\Bigr)  \frac{e^{V(x)}}{x^{(\alpha-1)/3}} 
 \lesssim 
  \frac{V(x)^{2/r'}}{x^{2(\alpha-1)/3}}  \lesssim x^{2(\alpha/r' - (\alpha-1)/3)}
\end{align*}
(we omit multiplicative constants not depending on $x$). Clearly, if $1<r\leq r_0(\alpha)$, then this is bounded as $x\to\infty$, and by the Barthe--Roberto criterion (see~\eqref{eq:LO-crit} above) the Lata{\l}a--Oleszkiewicz inequality with parameter $r$ does hold.

\medskip
Conversely, let us show that if $\nu$ enjoys  the Lata{\l}a--Oleszkiewicz with parameter $r$
then necessarily $r\le r_0(\alpha)$.  This can be seen by focusing on the points  $x_k=(2k+1)\pi$ where  $V'$ vanishes and our estimates can be reversed up to multiplicative constants. 
 Indeed, for $k$ large enough one can find a constant $c_3$ such that for $y\in[x_k-k^{-\beta}, x_k+k^{-\beta}]$,
 \begin{equation*}
  |V'(y)| \leq c_3 x_k^{\alpha-1} k^{-2\beta}.
\end{equation*}
For $k$ and $y$ as above,
\begin{align*}
 |V(y) - V(x_k)| \leq c_3 x_k^{\alpha-1} k^{-3\beta}\le c_4,
\end{align*}
using here that $\beta=(\alpha-1)/3$. Thus,
\begin{align*}
 \int_{x_k}^\infty e^{-V(x)} dx &\geq \int_{x_k}^{x_k+k^{-\beta}} e^{-V(x)}dx \geq k^{-\beta} e^{-V(x_k)}e^{-c_4},\\
 \int_0^{x_k} e^{V(x)} dx &\geq \int_{x_k-k^{-\beta}}^{x_k } e^{V(x)}dx \geq k^{-\beta} e^{V(x_k)}e^{-c_4}. 
\end{align*}
Consequently, as the function $t\mapsto t\log^{2/r'}(1+1/(2t))$ is increasing for small enough positive $t$, for $k$ sufficiently large we can write:
\begin{align*}
\MoveEqLeft
\nu([x_k, \infty)) \log^{2/r'}\Bigl(1+\frac{1}{2\nu([x_k, \infty))} \Bigr)\int_0^{x_k} e^{V(t)} dt\\
&\gtrsim
\frac{e^{-V(x_k)}}{k^\beta} \log^{2/r'}\Bigl( e^{V(x_k)} k^\beta\Bigr) \frac{e^{V(x_k)}}{k^\beta} \\
&\geq
\frac{ {V(x_k)}^{2/r'}}{k^{2\beta}}   \ge \frac{(2k\pi)^{2\alpha/r'}}{k^{2\beta}}.
\end{align*}
If $\nu$ satisfies the Lata{\l}a--Oleszkiewicz Inequality with parameter $r$, then by the 
 Barthe--Roberto criterion (see~\eqref{eq:LO-crit} above) the latter quantity remains bounded from above when $k\to\infty$. This forces $\alpha/r'\le \beta$, or equivalently $r\le r_0(\alpha)$.
 
\medskip
Next we turn to the proof of $(i)\Longleftrightarrow(iii)$. In view of Theorem \ref{thm:main:Toulouse}, we just need to prove that if $\nu$ satisfies a modified log-Sobolev inequality with parameter $r$ then necessarily $r\le r_0(\alpha)$. We apply the necessity part of
the criterion of Barthe--Roberto. It requires Assumption \eqref{eq:hyp-crit-mLS}, which is verified since \eqref{eq:exp-+V-final} is valid for $(r-1)V$ instead of $V$, with different numerical constants. Then we use the fact that the quantity $B^+_{mLS(r)}$ defined in \eqref{eq:BmLS} is bounded, together with lower bounds of $\int_{x_k}^\infty e^{-V}$
and $\int_0^{x_k} e^{(r-1)V}$. The computations are similar than the above ones for the Lata{\l}a--Oleszkiewicz inequality, and yield $r\le r_0(\alpha)$. We omit the details.
\end{proof}

\begin{proof}[Proof of Proposition~\ref{prop:vanishing-V-prim-transport}]
 Fix $\alpha\in(1,2]$ and denote, for $t\geq 0$,
 \[
  e^{-N(t)} = \frac{2}{Z} \int_{t}^\infty e^{-V(s)} ds.
 \]
Let $F_\nu$ and $F_{\exp}$ be the cumulative distribution functions of $\nu$ and the symmetric exponential measure with density $\frac{1}{2}e^{-|x|}$ respectively.

By Proposition~\ref{prop:vanishing-V-prim}, $\nu$ satisfies the Lata\l{}a--Oleszkiewicz inequality with $r_0(\alpha) >1$, so it satisfies the Poincar\'e inequality.  Thus, by Theorem~1.1 of \cite{GozW2}, in order to prove the assertion it suffices to show that  there exists $b = b(\alpha)>0$ such that 
 \[
  \bigl| F_{\nu}^{-1}(F_{\exp}(x)) -  F_{\nu}^{-1}(F_{\exp}(y)) \bigr|  \leq \frac{1}{b} (1+|x-y|)^{1/\alpha}
 \]
for all $x,y\in\RR$.

Note that for $x\geq 0$ we have $\tilde{x} = F_{\nu}^{-1}(F_{\exp}(x))$ if and only if
\[
 1 - \frac{1}{2} e^{-N(\tilde{x})}  = F_{\nu}(\tilde{x}) = F_{\exp}(x) = 1 - \frac{1}{2} e^{-x}.
\]
Thus it suffices to check whether there exists $b=b(\alpha)>0$ such that 
 \begin{equation}\label{eq:goal}
  b^\alpha \bigl| x - y\bigr|^\alpha  \leq 1+\bigl|N(|x|)\sgn(x) - N(|y|)\sgn(y)\bigr|
 \end{equation}
for all $x,y\in\RR$ (recall that $\nu$ is symmetric).

 If $|x-y|\leq 2\pi$, then one can guarantee that \eqref{eq:goal} holds simply by taking $b\leq (2\pi)^{-1}$. Let therefore consider the case when $|x-y|\geq 2\pi$. We have to cases:
\begin{enumerate}[1.]
\item $x,y$ are of different signs,
 \item $x,y$ are of the same sign.
\end{enumerate}

\emph{Case 1.} In the first case we have
\[
 b^\alpha |x-y|^\alpha = b^\alpha (|x|+|y|)^\alpha \leq 2^{\alpha-1} b^\alpha (|x|^\alpha + |y|^\alpha).
\]
From the proof of Proposition~\ref{prop:vanishing-V-prim} we know that for sufficiently large $t>0$ we have
\[
 \frac{1}{2} e^{-N(t)} = \frac{1}{Z} \int_t^{\infty} e^{-V(s)} ds \leq \frac{1}{Z(1-e^{-c_1(\alpha)})} \cdot\frac{e^{-V(t)}}{t^{(\alpha-1)/3}}
\]
(see \eqref{eq:exp--V-final}). Thus, for sufficiently large $t>0$,
\[
 N(t) \geq V(t) = |t+\sin(t)|^\alpha \geq \frac{1}{2} t^\alpha.
\]
Therefore, we can choose $b>0$ to be such that $2^{\alpha-1} b^\alpha t^\alpha \leq N(t) + \frac{1}{2}$ holds for all $t>0$. Then
\[
  b^\alpha |x-y|^\alpha \leq 2^{\alpha-1} b^\alpha (|x|^\alpha + |y|^\alpha) \leq 1+ N(|x|) + N(|y|),
\]
which is exactly \eqref{eq:goal} in the case when $x,y$ are of different signs.

\emph{Case 2.}  Suppose now that $x,y$ are of the same sign, say $x\geq y+2\pi \geq y \geq 0$.
Observe that, for $t>0$ and $k\in\NN$,
\[
 V(t+ 2k\pi)  = |t+ \sin(t) + 2k\pi|^\alpha \geq V(t) + (2k\pi)^\alpha.
\]
Thus, for $t>0$ and $s\geq 2\pi$,
\begin{equation*}
 V(t +s) \geq V(t + 2\pi\lfloor s /(2\pi)\rfloor) \geq V(t) + (2\pi)^\alpha \lfloor s/(2\pi)\rfloor^\alpha \geq  V(t) + s^\alpha/2^\alpha.
\end{equation*}
Therefore, for $t>0$ and $s\geq 2\pi$,
\begin{align*}
 e^{-N(t+s)} &= \frac{2}{Z} \int_{t+s}^{\infty} e^{-V(u)} du= \frac{2}{Z} \int_{t}^{\infty} e^{-V(u+s)} du\\
 &
 \leq \frac{2}{Z} \int_{t}^{\infty} e^{-V(u) - s^\alpha/2^\alpha} du = e^{-N(t) -  s^\alpha/2^\alpha}.\end{align*}
 Thus, if we take $b\leq 1/2$, then \eqref{eq:goal} holds also for $x\geq y+2\pi \geq y\geq 0$ (we substitute $x=t+s$, $y=t$).
 This finishes the proof.
\end{proof}

\begin{proof}[Proof of Proposition \ref{prop:vanishing-V-prim-big-lambda}]
Fix $\lambda,\alpha>1.$ We write $V$ for $V_{\alpha,\lambda}$. For sufficiently large $x>0$ we have
\[
 V'(x) =\alpha(x + \lambda \sin(x))^{\alpha-1}(1+\lambda\cos(x)). 
\]
Denote $x_k = (2k+1)\pi$, $k\in\NN$. There exists $\delta_0>0$ such that $V$ is decreasing on $[x_k-\delta_0, x_k+\delta_0]$ for sufficiently large $k$. Thus
\begin{equation}\label{eq:big-lambda-Muck-1}
 \int_{x_k}^\infty e^{-V(t)} dt \geq \int_{x_k}^{x_k + \delta_0} e^{-V(t)} dt \geq \delta_0 e^{-V(x_k)}.
\end{equation}

Moreover, by the convexity of the function $x\mapsto x^{\alpha}$ ($x>0$), for $h\in[0,\delta_0]$ we have
\begin{align*}
 V(x_k -h)  - V(x_k) &=  (x_k-h + \lambda \sin(x_k-h))^{\alpha} - (x_k + \lambda \sin(x_k))^{\alpha}\\
 &\geq \alpha(x_k + \lambda \sin(x_k))^{\alpha-1} (-h + \lambda \sin(x_k-h) )\\
&=\alpha x_k^{\alpha-1} ( \lambda \sin(h)  - h).
\end{align*}
Hence
\begin{align}
 \int_0^{x_k} e^{V(t)} dt &\geq  \int_{x_k-\delta_0}^{x_k} e^{V(t)} dt= \int_0^{\delta_0} e^{V(x_k-h)} dh\notag\\
 &\geq \int_0^{\delta_0} e^{V(x_k)  + \alpha x_k^{\alpha-1} ( \lambda \sin(h)  - h)} dh.
 \label{eq:big-lambda-Muck-2}
\end{align}
 Putting together \eqref{eq:big-lambda-Muck-1} and \eqref{eq:big-lambda-Muck-2},
we observe that 
\[ 
\int_{x_k}^\infty e^{-V(t)} dt \times \int_0^{x_k} e^{V(t)} dt \ge \delta_0 \int_0^{\delta_0} e^{  \alpha x_k^{\alpha-1} ( \lambda \sin(h)  - h)} dh
\]
tends to infinity when $k\to \infty$, since $\lambda \sin(h)  - h>0$ for $h$ positive and small enough. By the  Muckenhoupt criterion (see \eqref{eq:BP}), we may conclude that 
$\nu$ cannot satisfy  any Poincar\'e inequality.
\end{proof}

\subsection{Remarks on a general setting}
In this final section, we show informally how the method used in the calculations of the previous section can be extended to more general families of measures. The main issue is to derive estimates of the quantities $\int_{x_0}^x e^V$ and $\int_{x}^\infty e^{-V}$. For shortness we do not treat separately upper and lower bounds. 

The classical approach is based on writing $\int e^V=\int \frac{1}{V'} \times V'e^{V}$ and on an integration by parts. It  works if there exits $x_0$ and $\varepsilon>0$ such that for $x\ge x_0$, $V$ is of class $\mathcal C^2$, $V'>0$ and 
$\left|\frac{V"}{(V')^2} \right|\le 1-\varepsilon$. In this case, up to multiplicative constants 
which depend on $\varepsilon$, for $x\ge x_0$,
\[ \int_{x}^\infty e^{-V}\approx  \frac{e^{-V(x)}}{V'(x) }, \qquad \int_{x_0}^x e^V \approx  \frac{e^{V(x)}}{V'(x) } \cdot \]

This approach cannot work if $V'$ vanishes for arbitrarily large values, as it was the case for $V_\alpha$. The approach that we used for $\nu_{\alpha}$ can still be applied in such situations,
when the potential is, in some sense, essentially increasing.
The key parameter at point $x$ is a number $\theta(x)>0$ so that for some constants $C\ge c>0$ (independent of $x$),
\begin{align*}
   &\forall y\in [x-\theta(x),x+\theta(x)], \quad       |V(y)-V(x)|\le C, \\
   &V(x+\theta(x))\ge V(x)+c, \\
    &V(x-\theta(x))\le V(x)+c.    
\end{align*}
In words, $V$ is essentially constant on $[x-\theta(x),x+\theta(x)]$, but does increase between
the left endpoint and the center, and between the center and the right endpoint. 
For the upper bound, one also needs $V$ to grow at least linearly: $V(x+K)\ge V(x)+c$.
Under some additional assumptions (e.g., $\theta'$ is small enough compared to $c$), one gets 
\[ \int_{x}^\infty e^{-V}\approx  \theta(x)e^{-V(x)}, \qquad \int_{x_0}^x e^V \approx  \theta(x) e^{V(x)} \cdot \]
Note that when $V'(x)>0$, $1/V'(x)$ is heuristically the scale at which $V$ moves by 1, which 
makes a connection with the classical approach. Let us also mention that for measures having the latter properties, the Lata{\l}a--Oleszkiewicz and the modified log-Sobolev inequality with parameters $r$ will be true simultaneously. Indeed if $\theta$ is bounded from above, then
Condition \eqref{eq:hyp-crit-mLS} should be verified. Then the quantities $B^+_{LO(r)}$ and 
$B^+_{mLS(r)}$ are comparable since for $x$ large
\begin{align*}
& \left(\int_x^\infty e^{-V}\right) \log^{2/r'}\Bigl(1+\frac{1}{\int_x^\infty e^{-V}} \Bigr)\int_0^{x} e^{V(t)} dt \approx \theta(x)^2 \big( V(x)+\log \theta(x)\big)^{2/r'}, \\
& \left(\int_x^\infty e^{-V}\right) \log\Bigl(\frac{1}{\int_x^\infty e^{-V}} \Bigr)\left(\int_0^{x} e^{(r-1)V(t)}dt\right)^{1/(r-1)}  \approx \theta(x)^{r'} \big( V(x)+\log \theta(x)\big).
\end{align*}

Let us conclude with a simple observation about potentials which are nonincreasing 
on infinitely many intervals (variants involving essentially nonincreasing ones can be written).

 \begin{lemma}
  Let $\mu$ be a probability measure on the real line with density proportional to $\exp(-V(x))$ for some locally bounded $V\colon\RR\to\RR$. Suppose that there exists $\varepsilon>0$ and a sequence of positive real numbers $x_n\to\infty$, such that $V$ is nonincreasing on $(x_n-\varepsilon, x_n+\varepsilon)$. Then $\mu$ does not satisfy the Lata{\l}a--Oleszkiewicz inequality~\eqref{eq:LO} with any parameter $r\in(1,2)$.
 \end{lemma}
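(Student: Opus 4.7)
The plan is to apply the Barthe--Roberto capacity criterion for the Latała--Oleszkiewicz inequality recalled in \eqref{eq:LO-crit}: it suffices to prove that
\[
B_{LO(r)}^+ = \sup_{x>m} \mu([x,\infty))\log^{2/r'}\!\Bigl(1+\frac{1}{2\mu([x,\infty))}\Bigr)\int_m^x \frac{1}{n(t)}\,dt
\]
is infinite, by testing it along the sequence $x_n$. Since the density of $\mu$ is $n(t)=e^{-V(t)}/Z$ with $V$ locally bounded, $n$ is locally bounded away from $0$ and $\infty$, so all integrals make sense, and the median $m$ is finite. For $n$ large enough we have $m<x_n-\varepsilon$.

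The monotonicity assumption is used in a very direct way. On $(x_n-\varepsilon,x_n)$, $V$ is nonincreasing, so $V(t)\ge V(x_n)$ there, whence
\[
\int_m^{x_n}\frac{1}{n(t)}\,dt
\ge Z\int_{x_n-\varepsilon}^{x_n} e^{V(t)}\,dt
\ge Z\,\varepsilon\, e^{V(x_n)}.
\]
Symmetrically, on $(x_n,x_n+\varepsilon)$, $V(t)\le V(x_n)$, so
\[
\mu([x_n,\infty))\ge \mu\big([x_n,x_n+\varepsilon)\big)=\frac{1}{Z}\int_{x_n}^{x_n+\varepsilon} e^{-V(t)}\,dt\ge \frac{\varepsilon}{Z}\,e^{-V(x_n)}.
\]
Multiplying the two lower bounds, the factors $e^{\pm V(x_n)}$ and $Z$ cancel and we obtain
\[
\mu([x_n,\infty))\int_m^{x_n}\frac{1}{n(t)}\,dt\ \ge\ \varepsilon^{2}.
\]

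It remains to exploit the logarithmic factor. Since $\mu$ is a probability measure and $x_n\to\infty$, we have $\mu([x_n,\infty))\to 0$, hence $\log^{2/r'}\!\bigl(1+1/(2\mu([x_n,\infty)))\bigr)\to\infty$ for every $r\in(1,2)$. Combining with the previous bound,
\[
B_{LO(r)}^+\ \ge\ \varepsilon^{2}\,\log^{2/r'}\!\Bigl(1+\frac{1}{2\mu([x_n,\infty))}\Bigr)\ \xrightarrow[n\to\infty]{}\ \infty,
\]
so $\mu$ cannot satisfy \eqref{eq:LO}. There is no real obstacle here; the only mild point to be careful about is to choose $n$ large enough so that $x_n-\varepsilon>m$ and that $\mu([x_n,\infty))$ is small enough for the logarithmic factor to be large. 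The key structural observation is simply that a local nonincrease of $V$ forces a simultaneous lower bound on $\mu([x_n,\infty))$ and on the Hardy-type weight $\int_m^{x_n} 1/n$ whose product is bounded away from zero, which is exactly what ruins the capacity-type criterion.
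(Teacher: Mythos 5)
Your proof is correct and follows essentially the same route as the paper's: both use the Barthe--Roberto criterion \eqref{eq:LO-crit}, exploit the local nonincrease of $V$ to get the lower bounds $\int_{x_n-\varepsilon}^{x_n} e^{V}\ge \varepsilon e^{V(x_n)}$ and $\int_{x_n}^{x_n+\varepsilon}e^{-V}\ge \varepsilon e^{-V(x_n)}$ whose product is $\varepsilon^2$, and then let the divergent logarithmic factor finish the job. Your version merely tracks the normalization constant $Z$ and the median explicitly, which the paper leaves implicit.
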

  \begin{proof}
  From the assumption about the monotonicity of $V$ on the intervals $(x_n-\varepsilon,x_n+\varepsilon)$, we get
  \[
    \int_{x_n}^\infty e^{-V(t)} dt \int_0^{x_n} e^{V(t)} dt \geq \varepsilon e^{-V(x_n)}\cdot \varepsilon e^{V(x_n)} =\varepsilon^2.
  \]
  Moreover, $\log^{2/r'} \Bigl(1+\frac{1}{2\mu([x,\infty))}\Bigr) \to +\infty$ for any $r\in(1,2)$. Thus, by the Barthe--Roberto criterion (see~\eqref{eq:LO-crit} above) the Lata{\l}a--Oleszkiewicz inequality~\eqref{eq:LO} cannot hold, with any parameter $r\in(1,2)$.
  \end{proof}

The above result should be compared to Proposition \ref{prop:vanishing-V-prim-big-lambda}.
Observe that measures satisfying the hypotheses of the Lemma may verify a Poincar\'e inequality. This is the case for the potential $V(x)=\lfloor |x| \rfloor$ involving the integer part. This potential is constant on every interval $[k,k+1)$, $\in \mathbb N$. Nevertheless $V(x)$ is a bounded additive perturbation of the potential $|x|$ of the symmetric exponential distribution, hence
the associated measure satisfies a Poincar\'e inequality.


\def\cprime{$'$}
\providecommand{\bysame}{\leavevmode\hbox to3em{\hrulefill}\thinspace}
\providecommand{\MR}{\relax\ifhmode\unskip\space\fi MR }
\providecommand{\MRhref}[2]{%
  \href{http://www.ams.org/mathscinet-getitem?mr=#1}{#2}
}
\providecommand{\href}[2]{#2}

\end{document}